\newcommand{\p}{\partial}
\newcommand{\ord}{\mathop{\rm ord}\nolimits}
\newcommand{\sgn}{\mathop{\rm sgn}\nolimits}
\newcommand{\todo}[1][\null]{\ensuremath{\clubsuit}}
\newcommand{\noprint}[1]{}
\newtheorem{theorem}{Theorem}
\newtheorem{lemma}[theorem]{Lemma}
\newtheorem{corollary}[theorem]{Corollary}
\newtheorem{proposition}[theorem]{Proposition}
{\theoremstyle{definition}

\newtheorem{remark}[theorem]{Remark}
}
\begin{document}

\noindent
{\LARGE\bf Extended symmetry analysis\\ of two-dimensional degenerate Burgers equation\par}

\vspace{4mm}

\noindent
Olena~O.~Vaneeva$^\dag$, Roman O. Popovych$^\ddag$ and Christodoulos Sophocleous$^\S$

\vspace{4mm}

\noindent{\it
\hbox to 2.8 mm{$^{\dag\ddag}$\hfil}Institute of Mathematics of NAS of Ukraine, 3 Tereshchenkivs'ka Str., 01024 Kyiv, Ukraine\\[1mm]
\hbox to 2.8 mm{$^\ddag$\hfil}Fakult\"at f\"ur Mathematik, Universit\"at Wien, Oskar-Morgenstern-Platz 1, A-1090 Wien, Austria\\[1mm]
\hbox to 2.8 mm{$^\S$\hfil}Department of Mathematics and Statistics, University of Cyprus, Nicosia CY 1678, Cyprus
}

\vspace{4mm}

{\noindent E-mails:  $^\dag$vaneeva@imath.kiev.ua, $^\ddag$rop@imath.kiev.ua, $^\S$christod@ucy.ac.cy}

\vspace{7mm}\par\noindent\hspace*{5mm}\parbox{150mm}{\small
We carry out the extended symmetry analysis of a two-dimensional degenerate Burgers equation.
Its complete point-symmetry group is found using the algebraic method,
and all its generalized symmetries are proved equivalent to its Lie symmetries.
We also prove that the space of conservation laws of this equation is infinite-dimensional
and is naturally isomorphic to the solution space of the (1+1)-dimensional backward linear heat equation.
Lie reductions of the two-dimensional degenerate Burgers equation are comprehensively studied in the optimal way
and new Lie invariant solutions are constructed.
We additionally consider solutions that also satisfy an analogous nondegenerate Burgers equation.
In total, we construct four families of solutions of two-dimensional degenerate Burgers equation
that are expressed in terms of arbitrary (nonzero) solutions of the (1+1)-dimensional linear heat equation.
Various kinds of hidden symmetries and hidden conservation laws
(local and potential ones) are discussed as well.
As a by-product, we exhaustively describe generalized symmetries, cosymmetries and conservation laws
of the transport equation, also called the inviscid Burgers equation,
and construct new invariant solutions of the nonlinear diffusion and diffusion--convection equations with power nonlinearities of degree~$-1/2$.
}\par\vspace{4mm}

\noprint{
\noindent{\footnotesize
Keywords:
two-dimensional Burgers equation;
Lie symmetries;
Lie reductions;
conservation laws;
exact solutions;
linearization;
hidden symmetries;
discrete symmetries;
Burgers equation
\par}

MSC:
35-XX   Partial differential equations
 35Bxx  Qualitative properties of solutions
  35B06   Symmetries, invariants, etc.
 35Cxx  Representations of solutions
  35C05   Solutions in closed form
76-XX   Fluid mechanics {For general continuum mechanics, see 74Axx, or other parts of 74-XX}
 76Mxx  Basic methods in fluid mechanics [See also 65-XX]
  76M60   Symmetry analysis, Lie group and algebra methods
}

\section{Introduction}

Differential equations modeling real-world phenomena and processes
were intensively studied within the framework of symmetry analysis of differential equations,
see, e.g., \cite{Ibragimov1994V1,blum2010a,BlumanBook1989,boch1999a,olve1993b,ovsi1982} and references therein.
Such studies may include, in particular, Lie symmetries, generalized, potential, conditional and hidden symmetries, cosymmetries,
local, potential and hidden conservation laws, Hamiltonian structures, reduction modules and recursion operators.
Symmetry methods also provide criteria for selecting significant models among parameterized families
of candidates~\cite{FN,ovsi1982,popo2010c,popo2017a}.
The above necessitates solving various classification problems for differential equations
and for objects associated with them within symmetry analysis.
At the same time, the investigation of such objects had been carried out at an adequate level
only for the simplest or the most important systems of differential equations.
Results presented in the literature for other important models are often incomplete or even incorrect 
\cite{kudr2009a,popo2010a}.

One of the latter models is the two-dimensional degenerate Burgers equation
\begin{gather}\label{eq:(1+2)DDegenerateBurgersEq}
u_t+uu_x-u_{yy}=0.
\end{gather}
The equation~\eqref{eq:(1+2)DDegenerateBurgersEq} has a number of applications.
In mathematical finance,
it arises in the course of simulating agents' decisions under risk via
representing agents' preferences over consumption processes by a refined utility functional
that takes into account the agents' habit formation~\cite{Citti,Lanconelli}.
The equation
\begin{gather}\label{eq:(1+2)DDegenerateBurgersEqEquivForm}
u_t= D u_{yy}-\nu\left(u(1-u)\right)_x,
\end{gather}
which is equivalent to the equation~\eqref{eq:(1+2)DDegenerateBurgersEq} under a simple point transformation,
can be related to the description of interacting particles of two kinds on a lattice~\cite{Alexander},
or more specifically, to the description of the dynamics of a `rod',
which is a large particle occupying several lattice sites,
as it moves in a fluid of monomers that each occupies only one site.
Here both the rod and the monomers are assumed to move by hopping to unoccupied neighboring sites,
interacting with each other through a hard-core exclusion, which prohibits two particles from occupying the same site.
It was shown in~\cite{Alexander} that certain features of such ensembles can be well described
by the equation~\eqref{eq:(1+2)DDegenerateBurgersEqEquivForm}. 

In contrast to the famous Cole--Hopf transformation 
(actually originally found by Forsyth in 1906, see \cite[Chapter~2, Notes]{olve1993b}) 
for the classical (1+1)-dimensional Burgers equation, 
no methods for linearizing the equation~\eqref{eq:(1+2)DDegenerateBurgersEq}, 
in particular using differential substitutions, are known.
Thus, consider the family of differential substitutions 
of the form $u=f(t,x,y,\tilde u,\tilde u_x,\tilde u_y)$, 
where the independent variables~$t$, $x$ and~$y$ are not changed,
which generalizes the Cole--Hopf transformation.
It can be proved by the direct computation that among such substitutions there are no substitution 
that maps the equation~\eqref{eq:(1+2)DDegenerateBurgersEqEquivForm} to 
a linear (1+2)-dimensional second-order evolution equation. 

The equations~\eqref{eq:(1+2)DDegenerateBurgersEq} and~\eqref{eq:(1+2)DDegenerateBurgersEqEquivForm}
are members of the more general class of nonlinear ultraparabolic equations
\begin{gather}\label{eq_2dDiff0}
u_t= D u_{yy}+\nu \left[K(u)\right]_x,
\end{gather}
where $D$ and $\nu$ are nonzero constants, $K(u)$ is a smooth nonlinear function of~$u$.
Equations of the form~\eqref{eq_2dDiff0} are called
nonlinear diffusion--advection equations or nonlinear Kolmogorov equations.
They describe diffusion--convection processes
with the directional separation of the diffusion and convection effects~\cite{Escobedo}
and arise in mathematical finance in the same context~\cite{Lanconelli,Pascucci}
as the more specific equation~\eqref{eq:(1+2)DDegenerateBurgersEq}.
Lie symmetries of these equations and their invariant solutions were considered in~\cite{deme2007a}.
The complete group classification of equations~\eqref{eq_2dDiff0}
with $D$ and $\nu$ being functions of time variable~$t$ was carried out in~\cite{vane2016a}.
After the simple substitution $u\mapsto -u$, the equation~\eqref{eq:(1+2)DDegenerateBurgersEq}
also becomes a member of the class of variable-coefficient (1+2)-dimensional Burgers equations
$u_t=uu_x+A(t)u_{xx}+B(t)u_{yy}$ treated from the Lie symmetry point of view in~\cite{ivan2010c}.
\looseness=-1

Some exact solutions of the equation~\eqref{eq:(1+2)DDegenerateBurgersEq} were constructed
using the Lie reduction method in a number of papers, see, e.g.,~\cite{deme2007a,elwa2006a,rass2018a,saie1999c}.
However, the listed families of solutions are not abundant.
Each of them can be obtained using a two-step Lie reduction,
where the first step is a particular codimension-one Lie reduction of the equation~\eqref{eq:(1+2)DDegenerateBurgersEq}
to the (1+1)-dimensional linear heat equation or to the Burgers equation.

In the present paper,
we carry out the enhanced classical Lie symmetry analysis
of the two-dimensional degenerate Burgers equation~\eqref{eq:(1+2)DDegenerateBurgersEq},
which includes the complete classification of Lie reductions,
but this is only a minor part of the consideration.
We also thoroughly study point symmetries, generalized symmetries, cosymmetries and local conservation laws
of the equation~\eqref{eq:(1+2)DDegenerateBurgersEq}.
We construct new families of exact solutions of~\eqref{eq:(1+2)DDegenerateBurgersEq},
which are much wider and sophisticated than those found in the literature.
In particular, four families of these solutions
are parameterized by the general solution of the (1+1)-dimensional linear heat equation.
As a by-product of the study of reduced equations,
new exact solutions for certain (1+1)-dimensional diffusion--convection equations are obtained.
We also present complete spaces of canonical representatives of generalized symmetries, cosymmetries and conservation laws
for the transport equation, also called the inviscid Burgers equation.

We begin the study of the equation~\eqref{eq:(1+2)DDegenerateBurgersEq}
in Section~\ref{sec:BurgersSystemMLIAlgAndCompleteGroup}
with constructing its complete point symmetry group~$G$,
which includes both continuous and discrete point symmetries.
In fact, we find independent discrete symmetries using
the automorphism-based version of the algebraic method suggested in~\cite{hydo2000b}
that involves factoring out inner automorphisms.
To derive the general form of point symmetries of~\eqref{eq:(1+2)DDegenerateBurgersEq},
we then compose discrete symmetries with the continuous ones,
which are generated by vector fields from the maximal Lie invariance algebra~$\mathfrak g$
of~\eqref{eq:(1+2)DDegenerateBurgersEq}.

To comprehensively carry out Lie reductions of~\eqref{eq:(1+2)DDegenerateBurgersEq}
to partial differential equations with two independent variables and to ordinary differential equations
in Sections~\ref{sec:BurgersSystemLiereductionsOfCodim1} and~\ref{sec:BurgersSystemLieReductionOfCodim2},
respectively, we first classify the one- and two-dimensional subalgebras of the algebra~$\mathfrak g$.
We use the optimized Lie reduction technique
that was developed in~\cite{fush1994a,fush1994b,popo1995b}
and adapted for multidimensional Burgers-like models in~\cite{kont2017a}.
The optimization is achieved due to a special selection 
of representatives within classes of equivalent subalgebras as well as of the form of related ansatzes.
For each of the listed inequivalent one-dimensional subalgebra of~$\mathfrak g$,
we construct a respective ansatz for~$u$,
derive the corresponding reduced partial differential equation,
and look for hidden symmetries of~\eqref{eq:(1+2)DDegenerateBurgersEq}
that are associated with this Lie reduction.
We distinguish four Lie reductions of codimension one leading to well-known equations,
which are two instances of the (1+1)-dimensional linear heat equation, the Burgers equation and the transport equation.
Only these four reductions among the listed inequivalent Lie reductions
result in nontrivial hidden symmetries of~\eqref{eq:(1+2)DDegenerateBurgersEq}, both Lie and generalized ones.
Taking into account the Cole--Hopf transformation,
we get three families of invariant solutions
that are parameterized by the general solution of the (1+1)-dimensional linear heat equation.
The only essential Lie reductions of codimension two for~\eqref{eq:(1+2)DDegenerateBurgersEq} are those
related to two-dimensional subalgebras of~$\mathfrak g$ that do not contain, up to $G$-equivalence,
the one-dimensional subalgebras associated with the four distinguished Lie reductions of codimension one.
Integrating reduced ordinary differential equations
obtained in the course of inequivalent essential Lie reductions of codimension two,
we construct parameterized families of new invariant solutions of the equation~\eqref{eq:(1+2)DDegenerateBurgersEq},
which substantially differ from the known ones.

In Section~\ref{sec:(1+2)DDegenerateBurgersEqGenSyms}, we prove that
the quotient algebra of generalized symmetries of~\eqref{eq:(1+2)DDegenerateBurgersEq} 
with respect to the subalgebra of trivial symmetries 
is naturally isomorphic to its maximal Lie invariance algebra~$\mathfrak g$.
The result was predictable but its proof is sophisticated and cumbersome.

Section~\ref{sec:(1+2)DDegenerateBurgersEqCLs} concerns cosymmetries and conservation laws
of the equation~\eqref{eq:(1+2)DDegenerateBurgersEq}.
Any cosymmetry coset of~\eqref{eq:(1+2)DDegenerateBurgersEq} contains a cosymmetry of order~$-\infty$,
which is hence necessarily a conservation-law characteristic of~\eqref{eq:(1+2)DDegenerateBurgersEq}.
Therefore, both the analogous quotient spaces of cosymmetries and of conservation-law characteristics
of~\eqref{eq:(1+2)DDegenerateBurgersEq} are naturally isomorphic to the solution space
of the (1+1)-dimensional backward heat equation $\gamma_t+\gamma_{yy}=0$,
and the corresponding space of conserved currents canonically representing conservation laws
of~\eqref{eq:(1+2)DDegenerateBurgersEq} can be easily constructed.
Conservations laws of all inequivalent reduced partial differential equations except one,
which is the transport equation, are induced by conservation laws of the original equation~\eqref{eq:(1+2)DDegenerateBurgersEq}.
We compute the spaces of cosymmetries, conservation laws and conservation-law characteristics 
of the transport equation since we were not able to find their description in the literature.
We analyze which conservation laws of the transport equation can be interpreted as
hidden conservation laws of the equation~\eqref{eq:(1+2)DDegenerateBurgersEq}.
One more family of hidden conservation laws of~\eqref{eq:(1+2)DDegenerateBurgersEq}
is given by the potential conservation laws of the reduced equation coinciding with the Burgers equation.

In Section~\ref{sec:CommonSolutionsOf(1+2)DDegenerateAndNondegenerateBurgersEqs},
we consider solutions that are common for the equation~\eqref{eq:(1+2)DDegenerateBurgersEq}
and the (1+2)-dimensional nondegenerate Burgers equation $u_t+uu_x-u_{xx}-u_{yy}=0$.
Such solutions are affine in~$x$,
and we review wide families of them that were constructed in an explicit form in~\cite{kont2017a,raja2008a}.
In particular, the set of such solutions contains,
in addition to two of the above three similar families of invariant solutions,
one more family of solutions
that is parameterized by the general solution of the (1+1)-dimensional linear heat equation.

An enhanced complete list of inequivalent known closed-form solutions
of (1+1)-dimensional linear heat equation is given in Section~\ref{sec:ExactSolutionsOfHeatEq}.
Finally, in Section~\ref{sec:ExactSolutionsOfNDCEs}
we present new closed-form invariant solutions for
the nonlinear diffusion and diffusion--convection equations with power nonlinearities of degree~$-1/2$,
$v_t=(v^{-1/2}v_x)_x$ and \mbox{$v_t=(v^{-1/2}v_x)_x+v^{-1/2}v_x$}.
These equations had been intensively studied within the framework of symmetry analysis of differential equations
and are mapped by a simple substitution to reduced equations of~\eqref{eq:(1+2)DDegenerateBurgersEq}.

One more kind of symmetry-like objects that could be studied
for the (1+2)-dimensional degenerate Burgers equation~\eqref{eq:(1+2)DDegenerateBurgersEq}
is given by reduction modules~\cite{boyk2016a}, 
which are associated with nonclassical reductions introduced in~\cite{blum1969a}.
However, the classification of reduction modules
for the equation~\eqref{eq:(1+2)DDegenerateBurgersEq} is an interesting but complicated problem,
which deserves a separate consideration.

\section{Lie invariance algebra and complete point-symmetry group}\label{sec:BurgersSystemMLIAlgAndCompleteGroup}

The classical approach for deriving Lie symmetries is well known and was established in the last decades \cite{BlumanBook1989,olve1993b,ovsi1982}.
The maximal Lie invariance algebra of the two-dimensional degenerate Burgers equation~\eqref{eq:(1+2)DDegenerateBurgersEq} is
\[
\mathfrak g=\langle D^t,\,D^x,\,P^t,\,G^x,\,P^y,\,P^x\rangle,
\]
where
$D^t=2t\p_t+y\p_y-2u\p_u$,
$D^x=x\p_x+u\p_u$,
$P^t=\p_t$,
$G^x=t\p_x+\p_u$,
$P^y=\p_y$,
$P^x=\p_x$.
The nonzero commutation relations of these vector fields, up to antisymmetry, are the following
\begin{gather*}
[P^t,D^t]= 2P^t,\quad
[G^x,D^t]=-2G^x,\quad
[P^y,D^t]=  P^y,\\
[G^x,D^x]=  G^x,\quad
[P^x,D^x]=  P^x,\quad
[P^t,G^x]=  P^x.
\end{gather*}
The algebra~$\mathfrak g$ is solvable since
$\mathfrak g'=\langle P^t,\,G^x,\,P^y,\,P^x\rangle$,
$\mathfrak g''=\langle P^x\rangle$ and $\mathfrak g'''=\{0\}$.
The nilradical~$\mathfrak n$ of~$\mathfrak g$ coincides with~$\mathfrak g'$.
This is why the natural ranking of the basis elements of~$\mathfrak g$ is
\begin{gather}\label{eq:RankingOfBasisElementsOfMIA}
D^t\succ D^x\succ P^t\succ G^x\succ P^y\succ P^x.
\end{gather}
We fix the basis $(D^t,D^x,P^t,G^x,P^y,P^x)$ of~$\mathfrak g$.
The basis elements generate
simultaneous scalings of $t$, $y$ and~$u$,
scalings of $(x,u)$,
shifts in~$t$,
Galilean boosts in~$x$,
shifts in~$y$ and
shifts in~$x$, respectively.

The automorphism group $\mathrm{Aut}(\mathfrak g)$ of~$\mathfrak g$ consists of the linear operators on~$\mathfrak g$
whose matrices are, in the fixed basis, of the form
\[
\left(
\begin{array}{cccccc}
1&0&0&0&0&0\\
0&1&0&0&0&0\\
a_{31}&0&a_{33}&0&0&0\\
-2 a_{42}&a_{42}&0&a_{44}&0&0\\
a_{51}&0&0&0&a_{55}&0\\
-a_{31}a_{42}&a_{62}&-a_{33}a_{42}&\frac12a_{31}a_{44}&0&a_{33}a_{44}
\end{array}
\right),
\]
where the parameters~$a$'s are arbitrary constants with $a_{33}a_{44}a_{55}\ne0$.
The complete list of essential proper megaideals%
\footnote{%
We recall that a \emph{fully characteristic ideal} \cite[Exercise~14.1.1]{hilg2011a} (or, shortly, \emph{megaideal} \cite{bihl2011a,popo2003a})
of a Lie algebra is a subspace of this algebra that is invariant with respect to the automorphism group of this algebra.
}
of the algebra~$\mathfrak g$,
which are not sums of other megaideals, is exhausted by the following spans:
\begin{gather*}
\mathfrak m_1=\langle P^x\rangle,\quad
\mathfrak m_2=\langle P^y\rangle,\quad
\mathfrak m_3=\langle G^x,P^x\rangle,\quad
\mathfrak m_4=\langle P^t,P^x\rangle,\quad
\mathfrak m_5=\langle D^x,G^x,P^x\rangle,\\
\mathfrak m_6=\langle D^t+2D^x,P^t,P^y,P^x\rangle,\quad
\mathfrak m_{7,\alpha}=\langle D^t+\alpha D^x,P^t,G^x,P^y,P^x\rangle,\ \alpha\ne2.
\end{gather*}
The other proper megaideals are
\begin{gather*}
\mathfrak m_1+\mathfrak m_2=\langle P^y,P^x\rangle,\quad
\mathfrak m_2+\mathfrak m_3=\langle G^x,P^y,P^x\rangle,\quad
\mathfrak m_2+\mathfrak m_4=\langle P^t,P^y,P^x\rangle,\\
\mathfrak m_3+\mathfrak m_4=\langle P^t,G^x,P^x\rangle,\quad
\mathfrak m_2+\mathfrak m_3+\mathfrak m_4=\langle P^t,G^x,P^y,P^x\rangle,\\
\mathfrak m_2+\mathfrak m_5=\langle D^x,G^x,P^y,P^x\rangle,\quad
\mathfrak m_4+\mathfrak m_5=\langle D^x,P^t,G^x,P^x\rangle,\\
\mathfrak m_2+\mathfrak m_4+\mathfrak m_5=\langle D^x,P^t,G^x,P^y,P^x\rangle,\quad
\mathfrak m_3+\mathfrak m_6=\langle D^t+2D^x,P^t,G^x,P^y,P^x\rangle=\mathfrak m_{7,2}.\quad
\end{gather*}

Only some megaideals of the algebra~$\mathfrak g$ can be related to its structural objects
or, more widely, can be recursively computed
starting from the improper megaideals~$\{0\}$ and $\mathfrak g$ and using various techniques
without explicitly involving the group $\mathrm{Aut}(\mathfrak g)$;
see, e.g., a collection of these techniques in~\cite{bihl2015a}
and their applications in~\cite{bihl2011a,card2012a,popo2003a}.
Thus,
\[
\mathfrak m_2+\mathfrak m_3+\mathfrak m_4=\mathfrak g'=\mathfrak n,\quad
\mathfrak m_1=\mathfrak g'',\quad
\mathfrak m_1+\mathfrak m_2=\mathrm Z_{\mathfrak g'},
\]
where $\mathrm Z_{\mathfrak a}$ denotes the center of a Lie algebra~$\mathfrak a$.
To present a structural interpretation of other megaideals of~$\mathfrak g$,
we apply Proposition~1 of~\cite{card2012a}.
It states that
if~$\mathfrak i_0$, $\mathfrak i_1$ and~$\mathfrak i_2$ are megaideals of~$\mathfrak g$,
then the set~$\mathfrak s$ of elements from~$\mathfrak i_0$
whose commutators with arbitrary elements from~$\mathfrak i_1$ belong to~$\mathfrak i_2$
is also a megaideal of~$\mathfrak g$.
We first choose $(\mathfrak i_0,\mathfrak i_1,\mathfrak i_2)=(\mathfrak g,\mathrm Z_{\mathfrak g'},\mathfrak g'')$,
which gives $\mathfrak s_1=\mathfrak m_2+\mathfrak m_4+\mathfrak m_5$.
Then $\mathfrak m_2=\mathrm Z_{\mathfrak s_1}$, $\mathfrak m_3=\mathfrak s_1'$
and thus $\mathfrak m_2+\mathfrak m_3=\mathrm Z_{\mathfrak s_1}+\mathfrak s_1'$.
For $(\mathfrak i_0,\mathfrak i_1,\mathfrak i_2)=(\mathfrak g,\mathfrak s_1,\mathfrak s_1')$
and $(\mathfrak i_0,\mathfrak i_1,\mathfrak i_2)=(\mathfrak s_1,\mathfrak s_1,\mathfrak g'')$
we respectively obtain $\mathfrak s_2=\mathfrak m_5$ and $\mathfrak s_3=\mathfrak m_2+\mathfrak m_4$.
On the next iteration we take
$(\mathfrak i_0,\mathfrak i_1,\mathfrak i_2)=(\mathfrak g,\mathfrak g,\mathfrak s_3)$
to obtain $\mathfrak s_4=\mathfrak m_6$.
Hence, $\mathfrak m_3+\mathfrak m_6=\mathfrak s_1'+\mathfrak s_4$.

The megaideals~$\mathfrak m_4$, $\mathfrak m_3+\mathfrak m_4$, $\mathfrak m_4+\mathfrak m_5$
and $\mathfrak m_{7,\alpha}$ with $\alpha\ne2$
do not admit an interpretation of the above kind.
Their occurrence is explained by the fact
that only some constraints on automorphisms of~$\mathfrak g$
can be described in terms of commutators of subspaces of~$\mathfrak g$.

\begin{lemma}\label{lem:(1+2)DDegenerateBurgersEqDiscreteSyms}
A complete list of discrete symmetry transformations
of the (1+2)-dimensional degenerate Burgers equation~\eqref{eq:(1+2)DDegenerateBurgersEq}
that are independent up to combining with each other and with continuous symmetry transformations of this equation
is exhausted by two transformations alternating signs of variables,
$(t,x,y,u)\mapsto(t,-x,y,-u)$ and $(t,x,y,u)\mapsto(t,x,-y,u)$.
\end{lemma}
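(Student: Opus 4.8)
The plan is to use the algebraic method for finding discrete point symmetries, following Hydon's automorphism-based approach as cited in the paper. The key principle is that any point symmetry of~\eqref{eq:(1+2)DDegenerateBurgersEq} must, by push-forward, induce an automorphism of the maximal Lie invariance algebra~$\mathfrak g$; moreover, this automorphism must preserve the set of megaideals, since megaideals are by definition invariant under the whole automorphism group and hence \emph{a fortiori} under those automorphisms induced by point symmetries. So the first step is to characterize which elements of $\mathrm{Aut}(\mathfrak g)$ can possibly arise from point symmetries, then realize a representative set of these by explicit point transformations, and finally factor out the contribution of the continuous symmetries (the inner automorphisms generated by $\mathfrak g$ itself, which correspond to the connected identity component of~$G$).

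First I would observe that the continuous symmetry transformations generated by $\mathfrak g$ already realize all inner automorphisms, and from the explicit form of the matrix of $\mathrm{Aut}(\mathfrak g)$ given above one reads off that $\mathrm{Aut}(\mathfrak g)/\mathrm{Inn}(\mathfrak g)$ is generated by a few sign-type involutions on the parameters $a_{33},a_{44},a_{55}$. Concretely, after using inner automorphisms to normalize the continuous parameters, the residual freedom amounts to independent sign choices: one may flip the sign of the scaling parameter $a_{55}$ acting on $P^y$ (reflecting $\p_y\mapsto-\p_y$), and one may flip a sign combination among $a_{33},a_{44}$ acting on $P^t$, $G^x$, $P^x$; the constraint $a_{31}=a_{33}a_{42}$-type relations in the matrix and the structure of the megaideals $\mathfrak m_3=\langle G^x,P^x\rangle$, $\mathfrak m_2=\langle P^y\rangle$ cut this down. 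The task is thus to enumerate the coset representatives of the outer automorphism group that additionally respect \emph{all} megaideals $\mathfrak m_1,\dots,\mathfrak m_7,\alpha$ listed above — in particular $\mathfrak m_{7,\alpha}$ forces $D^t$ to be essentially fixed (its $D^t$-component must be $1$, not $-1$, as the matrix already shows), so no orientation reversal of time is available.

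Next, for each candidate outer automorphism I would write down the most general point transformation $(t,x,y,u)\mapsto(\tilde t,\tilde x,\tilde y,\tilde u)$ whose push-forward on vector fields induces that automorphism, and check directly that it maps~\eqref{eq:(1+2)DDegenerateBurgersEq} to itself. Because the relevant automorphisms are just sign flips, the natural ansatz is that the transformation is affine/monomial in the variables, and one quickly arrives at the two candidates $(t,x,y,u)\mapsto(t,-x,y,-u)$ and $(t,x,y,u)\mapsto(t,x,-y,u)$: the first is forced by the requirement $u_x u$ stay invariant together with $u_t$ (so $x$ and $u$ must flip together), the second by $u_{yy}$ being even in $y$. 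A short substitution verifies both are genuine symmetries. One then argues that every discrete symmetry, composed with a suitable continuous symmetry, reduces to a product of these two (and the identity), so that the quotient of the full symmetry group by its identity component is $\mathbb{Z}_2\times\mathbb{Z}_2$ generated by the two stated transformations.

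The main obstacle I anticipate is the bookkeeping at the algebraic step: correctly identifying the quotient $\mathrm{Aut}(\mathfrak g)/\mathrm{Inn}(\mathfrak g)$ and, crucially, verifying that the full list of megaideals (especially the one-parameter family $\mathfrak m_{7,\alpha}$ and the ``asymmetric'' ones $\mathfrak m_4$, $\mathfrak m_3+\mathfrak m_4$, $\mathfrak m_4+\mathfrak m_5$ that have no structural interpretation) does not permit any further discrete automorphism — for instance ruling out a simultaneous sign change of $t$ together with rescalings, or an exchange-type map. This is where the careful preparatory analysis of megaideals in the paragraphs preceding the lemma pays off: those megaideals are precisely the constraints that pin down the admissible automorphisms, and one must check each candidate against them. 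Once the algebra is settled, the realization by explicit transformations and the final verification are routine.
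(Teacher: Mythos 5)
Your overall strategy is the same as the paper's: push forward by a candidate point symmetry to get an automorphism of~$\mathfrak g$, factor out the inner automorphisms realized by the continuous symmetries, realize the remaining coset representatives by explicit transformations, and finish with a direct substitution. However, there is a genuine gap at the central algebraic step. The quotient $\mathrm{Aut}(\mathfrak g)/\mathrm{Inn}(\mathfrak g)$ is \emph{not} generated by sign-type involutions: it is identified with the matrices $\mathrm{diag}(1,1,b,\varepsilon/b,\varepsilon',\varepsilon)$ with $\varepsilon,\varepsilon'=\pm1$ and $b$ running through all of $\mathbb R\setminus\{0\}$. The continuous parameter $b$ survives the quotient because the inner automorphisms constrain $a_{33}$ to equal $a_{55}^{\,2}$ (both are exponentials of the same $\delta_1$), whereas in $\mathrm{Aut}(\mathfrak g)$ these entries are independent; the invariant $b=a_{33}/a_{55}^{\,2}$ cannot be normalized away. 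Consequently your enumeration of candidates is incomplete: a point symmetry realizing $b=b_0\ne1$ would generate a whole $\mathbb Z$ of extra discrete symmetries, and nothing in the automorphism structure or in the megaideals forbids this. In particular your claim that $\mathfrak m_{7,\alpha}$ rules out time reversal is false --- $D^t=2t\p_t+y\p_y-2u\p_u$ is invariant under $t\mapsto-t$, and time reversal corresponds precisely to $b=-1$ in the quotient. The paper eliminates $b\ne1$ only at the direct-method stage: solving the determining system for the components of $\mathcal T$ gives $T=bt$, $X=\varepsilon x$, $Y=\varepsilon'y$, $U=\varepsilon b^{-1}u$, and substituting into the transformed equation forces $b=1$. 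Without that step the completeness assertion of the lemma is not proved.

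A secondary point: your use of megaideal preservation as an ``additional'' filter on which automorphisms can arise is vacuous here. Every automorphism preserves every megaideal by definition, so once the full group $\mathrm{Aut}(\mathfrak g)$ is in hand (as it is in this paper, given explicitly in matrix form), checking megaideals adds no constraint. Megaideals earn their keep in the variant of the algebraic method where one avoids computing $\mathrm{Aut}(\mathfrak g)$ and instead imposes $\mathcal T_*\mathfrak m\subseteq\tilde{\mathfrak m}$ directly on the transformation components; the paper's discussion of megaideals is structural background and is not what drives the proof of this lemma.
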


\begin{proof}
The maximal Lie invariance algebra~$\mathfrak g$ of the equation~\eqref{eq:(1+2)DDegenerateBurgersEq}
is nontrivial and finite-dimensional.
The automorphism group~$\mathrm{Aut}(\mathfrak g)$ is easily computed.
It is not much wider than the inner automorphism group~$\mathrm{Inn}(\mathfrak g)$ of~$\mathfrak g$,
which is constituted by the linear operators on~$\mathfrak g$ with matrices of the form
\[
\left(
\begin{array}{cccccc}
1&0&0&0&0&0\\
0&1&0&0&0&0\\
2{\rm e}^{2\delta_1}\delta_3&0&{\rm e}^{2\delta_1}&0&0&0\\
2{\rm e}^{\delta_2-2\delta_1}\delta_4&-{\rm e}^{\delta_2-2\delta_1}\delta_4&0&{\rm e}^{\delta_2-2\delta_1}&0&0\\
{\rm e}^{\delta_1}\delta_5&0&0&0&{\rm e}^{\delta_1}&0\\
2{\rm e}^{\delta_2}\delta_3\delta_4&{\rm e}^{\delta_2}(\delta_6-\delta_3\delta_4)&{\rm e}^{\delta_2}\delta_4&{\rm e}^{\delta_2}\delta_3&0&{\rm e}^{\delta_2}
\end{array}
\right),
\]
where the parameters~$\delta_1$, \dots, $\delta_6$ are arbitrary constants.
This is why we apply the automor\-phism-based version of the algebraic method
of finding the complete point-symmetry groups of differential equations
that involves factoring out inner automorphisms.
This version was suggested in~\cite{hydo2000b} and further developed in~\cite{bihl2015a,card2012a,kont2017a}.
Then, we use constraints obtained by the algebraic method for components of point symmetry transformations
to complete the proof with the direct method.
See, e.g., \cite{king1998a} for techniques of computing point transformations between differential equations by the direct method.

The quotient group $\mathrm{Aut}(\mathfrak g)/\mathrm{Inn}(\mathfrak g)$
can be identified with the matrix group consisting of the diagonal matrices of the form
$\mathrm{diag}(1,1,b,\varepsilon/b,\varepsilon',\varepsilon)$,
where $\varepsilon,\varepsilon'=\pm1$ and
the parameter~$b$ runs through~$\mathbb R\setminus\{0\}$.
Suppose that the pushforward~$\mathcal T_*$ of vector fields in the space with coordinates $(t,x,y,u)$ by a point transformation
\[
\mathcal T\colon\quad (\tilde t,\tilde x,\tilde y,\tilde u)=(T,X,Y,U)(t,x,y,u)
\]
is the automorphism of~$\mathfrak g$ with the matrix $\mathrm{diag}(1,1,b,\varepsilon/b,\varepsilon',\varepsilon)$, i.e.,
\begin{gather*}
\mathcal T_*P^x=\varepsilon \tilde P^x,\ \
\mathcal T_*P^y=\varepsilon'\tilde P^y,\ \
\mathcal T_*G^x=\varepsilon b^{-1}\tilde G^x,\ \
\mathcal T_*P^t=b\tilde P^t,\ \
\mathcal T_*D^x=\tilde D^x,\ \
\mathcal T_*D^t=\tilde D^t,
\end{gather*}
where tildes over vector fields mean that these vector fields are given in the new coordinates.
We componentwise split the above conditions for~$\mathcal T_*$
and thus derive a system of differential equations for the components of~$\mathcal T$,
\begin{gather*}
X_x=\varepsilon,\quad T_x=Y_x=U_x=0,\\
Y_y=\varepsilon',\quad T_y=X_y=U_y=0,\\
tX_x+X_u=\varepsilon b^{-1}T,\quad U_u=\varepsilon b^{-1},\quad T_u=Y_u=0,\\
T_t=b,\quad X_t=Y_t=U_t=0,\\
xX_x+uX_u=X,\quad uU_u=U,\quad tT_t=T,\quad yY_y=Y.
\end{gather*}
This system implies that $T=bt$ and hence $X_u=0$.
Furthermore, $X=\varepsilon x$, $Y=\varepsilon'y$ and $U=\varepsilon b^{-1}u$.

Using the chain rule, we express all the required transformed derivatives in terms of the initial coordinates,
$\tilde u_{\tilde t}=\varepsilon b^{-2}u_t$,
$\tilde u_{\tilde x}=b^{-1}u_x$ and
$\tilde u_{\tilde y\tilde y}=\varepsilon b^{-1}u_x$.
Then we substitute the obtained expressions into the copy of the equation~\eqref{eq:(1+2)DDegenerateBurgersEq}
in the new coordinates.
The expanded equation should identically be satisfied by each solution of the equation~\eqref{eq:(1+2)DDegenerateBurgersEq}.
This condition implies that $b=1$.
Therefore, discrete symmetries of the equation~\eqref{eq:(1+2)DDegenerateBurgersEq} are exhausted,
up to combining with continuous symmetries and with each other,
by the two involutions $(t,x,y,u)\mapsto(t,-x,y,-u)$ and $(t,x,y,u)\mapsto(t,x,-y,u)$,
which are associated with the values $(\varepsilon,\varepsilon')=(-1,1)$ and \mbox{$(\varepsilon,\varepsilon')=(1,-1)$},
respectively.
\end{proof}

\begin{corollary}
The factor group of the complete point-symmetry group~$G$
of the (1+2)-dimen\-sional degenerate Burgers equation~\eqref{eq:(1+2)DDegenerateBurgersEq}
with respect to its identity component is isomorphic to the group $\mathbb Z_2\times\mathbb Z_2$.
\end{corollary}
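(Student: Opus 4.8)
The plan is to read the corollary off Lemma~\ref{lem:(1+2)DDegenerateBurgersEqDiscreteSyms} after identifying the identity component~$G^\circ$ of~$G$ with the group of continuous point symmetries of~\eqref{eq:(1+2)DDegenerateBurgersEq}. First I would note that, since the maximal Lie invariance algebra~$\mathfrak g$ is finite-dimensional and is exactly the Lie algebra of the point-symmetry group, $G$ is a finite-dimensional Lie group whose connected subgroup generated by the flows of the vector fields from~$\mathfrak g$ is open in~$G^\circ$ and hence coincides with~$G^\circ$. Consequently $G$ is generated by~$G^\circ$ together with any complete set of discrete point symmetries, so the factor group $G/G^\circ$ is generated by the cosets of such symmetries.

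Next, by Lemma~\ref{lem:(1+2)DDegenerateBurgersEqDiscreteSyms} every discrete point symmetry of~\eqref{eq:(1+2)DDegenerateBurgersEq} coincides, up to composition with an element of~$G^\circ$ and with the other discrete symmetry, with one of the involutions $\mathcal I_1\colon(t,x,y,u)\mapsto(t,-x,y,-u)$ and $\mathcal I_2\colon(t,x,y,u)\mapsto(t,x,-y,u)$. Hence $G/G^\circ$ is generated by the two cosets $\bar{\mathcal I}_1$ and $\bar{\mathcal I}_2$. Since $\mathcal I_1^2=\mathcal I_2^2=\mathrm{id}$ as point transformations, each of these cosets has order at most two; and since $\mathcal I_1$ and $\mathcal I_2$ commute as point transformations (their composition in either order is $(t,x,y,u)\mapsto(t,-x,-y,-u)$), the group $G/G^\circ$ is abelian. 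Thus $G/G^\circ=\{e,\bar{\mathcal I}_1,\bar{\mathcal I}_2,\bar{\mathcal I}_1\bar{\mathcal I}_2\}$ is a quotient of $\mathbb Z_2\times\mathbb Z_2$.

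It then remains to verify that these four cosets are pairwise distinct, equivalently that none of $\mathcal I_1$, $\mathcal I_2$, $\mathcal I_1\mathcal I_2$ lies in~$G^\circ$; but this is precisely the content of the independence claim in Lemma~\ref{lem:(1+2)DDegenerateBurgersEqDiscreteSyms}, because membership in~$G^\circ$ would realize the corresponding involution as a combination of continuous symmetries, and for $\mathcal I_1\mathcal I_2$ it would in addition express~$\mathcal I_1$ through~$\mathcal I_2$ modulo continuous symmetries. Hence $\bar{\mathcal I}_1\ne e$, $\bar{\mathcal I}_2\ne e$ and $\bar{\mathcal I}_1\bar{\mathcal I}_2\ne e$, so the four cosets are distinct and $G/G^\circ\cong\mathbb Z_2\times\mathbb Z_2$. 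The only step requiring genuine care is the identification of~$G^\circ$ with the group of continuous symmetries, i.e., checking that no discrete symmetry furnished by Lemma~\ref{lem:(1+2)DDegenerateBurgersEqDiscreteSyms} is hidden inside the connected component; everything else is a formal manipulation with cosets.
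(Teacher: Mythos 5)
Your argument is correct and is essentially the route the paper intends: the corollary is read off Lemma~\ref{lem:(1+2)DDegenerateBurgersEqDiscreteSyms}, whose proof already identifies the discrete symmetries modulo the identity component with the parameter pairs $(\varepsilon,\varepsilon')\in\{\pm1\}^2$, i.e., with $\mathbb Z_2\times\mathbb Z_2$. Your explicit verification that the two involutions commute, square to the identity, and represent distinct nontrivial cosets just fills in the details the paper leaves implicit.
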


Therefore, the complete point-symmetry group~$G$ of the (1+2)-dimensional degenerate Burgers equation~\eqref{eq:(1+2)DDegenerateBurgersEq}
is generated by one-parameter point-transformation groups associated with vector fields from the algebra~$\mathfrak g$
and the two discrete transformations given in Lemma~\ref{lem:(1+2)DDegenerateBurgersEqDiscreteSyms}.

\begin{theorem}\label{thm:BurgersSystemCompletePointSymGroup}
The complete point-symmetry group~$G$ of the (1+2)-dimensional degenerate Burgers equation~\eqref{eq:(1+2)DDegenerateBurgersEq}
consists of the point transformations
\begin{gather*}
\tilde t=\delta_1^2t+\delta_3, \quad
\tilde x=\delta_2x+\delta_4t+\delta_6, \quad
\tilde y=\delta_1y+\delta_5, \quad
\tilde u=\delta_1^{-2}\delta_2u+\delta_4,
\end{gather*}
where the parameters~$\delta_1$, \dots, $\delta_6$ are arbitrary constants with $\delta_1\delta_2\ne0$.
\end{theorem}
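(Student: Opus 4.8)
The plan is to obtain the explicit description of~$G$ by integrating the structural information already at hand. According to the remark preceding the theorem, $G$ is generated by the one-parameter groups of point transformations corresponding to the basis vector fields of~$\mathfrak g$ together with the two discrete involutions listed in Lemma~\ref{lem:(1+2)DDegenerateBurgersEqDiscreteSyms}. So it suffices to (i)~write out these one-parameter flows, (ii)~compose them in a fixed order and reparametrize the product to the form claimed in the statement, (iii)~adjoin the two involutions and determine which parameter ranges this produces, and (iv)~check that the resulting six-parameter family is closed under composition and inversion; the two inclusions then force this family to coincide with~$G$.

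First I would integrate the six basis vector fields. The translations $P^t$, $P^y$, $P^x$ give $\tilde t=t+\mu_3$, $\tilde y=y+\mu_5$, $\tilde x=x+\mu_6$; the Galilean boost $G^x=t\p_x+\p_u$ gives $\tilde x=x+\mu t$, $\tilde u=u+\mu$; the scaling $D^x=x\p_x+u\p_u$ gives $\tilde x={\rm e}^{\sigma}x$, $\tilde u={\rm e}^{\sigma}u$; and the scaling $D^t=2t\p_t+y\p_y-2u\p_u$ gives $\tilde t={\rm e}^{2\rho}t$, $\tilde y={\rm e}^{\rho}y$, $\tilde u={\rm e}^{-2\rho}u$. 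Composing, say, the two (commuting) scalings, then the boost, then the three translations, and putting $\delta_1={\rm e}^{\rho}$, $\delta_2={\rm e}^{\sigma}$ (hence $\delta_1,\delta_2>0$), together with a suitable renaming of the boost and translation parameters as $\delta_4,\delta_3,\delta_5,\delta_6$, yields exactly a transformation of the displayed form, but with the extra restriction $\delta_1,\delta_2>0$. This step is purely a matter of bookkeeping: one must keep track of how the $t$-scaling hidden in~$\tilde t$ interacts with the boost term in~$\tilde x$ and with the constant in~$\tilde u$, so that the coefficients come out consistently.

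Next, the two involutions of Lemma~\ref{lem:(1+2)DDegenerateBurgersEqDiscreteSyms} are recovered as the members of the displayed family with $(\delta_1,\delta_2,\delta_3,\delta_4,\delta_5,\delta_6)=(1,-1,0,0,0,0)$ and $(-1,1,0,0,0,0)$; adjoining them to the identity component therefore relaxes $\delta_1,\delta_2>0$ to $\delta_1\delta_2\neq0$. It then remains to verify that the six-parameter family~$\mathcal F$ of point transformations of the displayed form is closed under composition and under taking inverses. This is a short direct computation: composing two such transformations, one reads off an explicit (quadratic in the~$\delta$'s) composition law for the parameters, which in particular shows $\mathcal F$ is a group. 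Since $\mathcal F$ contains every generator of~$G$ named in the remark, $G\subseteq\mathcal F$; conversely every element of~$\mathcal F$ is, by construction, a composition of one-parameter flows of vector fields from~$\mathfrak g$ with the two involutions, hence lies in~$G$. Thus $\mathcal F=G$, which is the assertion.

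Finally, I do not expect a genuine obstacle here. The substantive work—exhaustively determining the discrete symmetries and thereby pinning down the component group $\mathbb Z_2\times\mathbb Z_2$—has already been done in Lemma~\ref{lem:(1+2)DDegenerateBurgersEqDiscreteSyms} and its corollary. The only mildly delicate points in the present proof are keeping the parameter bookkeeping under composition transparent and confirming that the two involutions, and nothing beyond them, account for the passage from $\delta_1,\delta_2>0$ to $\delta_1\delta_2\neq0$.
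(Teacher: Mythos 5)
Your strategy is the one the paper itself uses: Theorem~\ref{thm:BurgersSystemCompletePointSymGroup} is stated right after the remark that $G$ is generated by the one-parameter flows of the basis of~$\mathfrak g$ together with the two involutions of Lemma~\ref{lem:(1+2)DDegenerateBurgersEqDiscreteSyms}, and no further argument is supplied, so the intended proof is exactly your steps (i)--(iv). Your integration of the six basis fields is correct, and adjoining the two involutions to pass from $\delta_1,\delta_2>0$ to $\delta_1\delta_2\ne0$ is also right.

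The trouble is concentrated in the step you dismiss as ``purely a matter of bookkeeping'': carried out honestly, it does \emph{not} yield the displayed form. When you compose the flow of $D^t$ (with $\mathrm{e}^{2\rho}=\delta_1^2$) with the Galilean boost $G^x$, the coefficient of~$t$ in~$\tilde x$ and the additive constant in~$\tilde u$ come out differing by the factor~$\delta_1^{2}$, in whichever order you compose; the family you actually obtain is
\[
\tilde t=\delta_1^2t+\delta_3,\quad
\tilde x=\delta_2x+\delta_4t+\delta_6,\quad
\tilde y=\delta_1y+\delta_5,\quad
\tilde u=\delta_1^{-2}(\delta_2u+\delta_4),
\]
not the one printed in the theorem. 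Indeed, substituting the printed transformation into the equation leaves the residual term $\delta_4\bigl(\delta_1^{-2}-\delta_1^{-4}\bigr)u_x$, so it is a symmetry only when $\delta_4=0$ or $\delta_1^2=1$; concretely, with $\delta_1=2$, $\delta_2=1$, $\delta_4=1$, $\delta_3=\delta_5=\delta_6=0$ it maps the solution $u=x/t$ to $\tilde u=\tilde x/\tilde t+3/4$, which is not a solution. The printed family is also not closed under composition, so your own step~(iv) would have exposed the discrepancy had you executed it rather than only announcing it. None of this invalidates your method --- the statement contains a misprint (the constant in $\tilde u$ should be $\delta_1^{-2}\delta_4$, or equivalently the $t$-coefficient in $\tilde x$ should be $\delta_1^2\delta_4$) --- but you cannot assert that the composition ``yields exactly a transformation of the displayed form''; you must actually perform the composition, record the corrected component, and verify closure for that corrected family.
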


\section{Lie reductions of codimension one}\label{sec:BurgersSystemLiereductionsOfCodim1}

We classify one-dimensional subalgebras of the algebra~$\mathfrak g$
up to the equivalence relation generated by the induced
adjoint action of the point-symmetry group~$G$ of the equation~\eqref{eq:(1+2)DDegenerateBurgersEq} on~$\mathfrak g$.
Since the Lie algebra~$\mathfrak g$ is solvable,
we explore the classification cases in an order that is consistent with the ranking~\eqref{eq:RankingOfBasisElementsOfMIA}.
The selected one-dimensional $G$-inequivalent subalgebras are additionally arranged
in order to be convenient for successive Lie reductions.

A complete list of one-dimensional $G$-inequivalent subalgebras of~$\mathfrak g$ is constituted by the subalgebras
\begin{gather*}
\mathfrak g_{1.1}^\kappa             =\langle D^t+2\kappa D^x                  \rangle,\quad
\mathfrak g_{1.2}                    =\langle D^t+2P^x                         \rangle,\quad
\mathfrak g_{1.3}                    =\langle D^t+2D^x+2G^x                    \rangle,\\
\mathfrak g_{1.4}^{\varepsilon'\beta}=\langle D^x-\varepsilon'P^t+\beta P^y    \rangle_{\varepsilon'=\pm1,\,\beta\geqslant0},\quad
\mathfrak g_{1.5}^\delta             =\langle D^x-\delta P^y                   \rangle_{\delta\in\{0,1\}},\\
\mathfrak g_{1.6}^{\delta\,\delta'}  =\langle P^t+\delta G^x+\delta'P^y        \rangle_{\delta,\delta'\in\{0,1\}},\quad
\mathfrak g_{1.7}                    =\langle G^x+P^y                          \rangle,\\
\mathfrak g_{1.8}                    =\langle G^x                              \rangle,\quad
\mathfrak g_{1.9}                    =\langle P^x                              \rangle,\quad
\mathfrak g_{1.10}                   =\langle P^y-P^x                          \rangle,\quad
\mathfrak g_{1.11}                   =\langle P^y                              \rangle.
\end{gather*}

Ansatzes constructed with one-dimensional subalgebras of~$\mathfrak g$
reduce the equation~\eqref{eq:(1+2)DDegenerateBurgersEq}
to partial differential equations in two independent variables.
In Table~\ref{tab:LieReductionsOfCodim1}, for each of the listed one-dimensional subalgebras,
we present an ansatz constructed for $u$ and the corresponding reduced equation.

\begin{table}[!ht]\footnotesize
\caption{Lie reductions of codimension one}
\label{tab:LieReductionsOfCodim1}
\begin{center}
\renewcommand{\arraystretch}{1.4}
\begin{tabular}{|r|c|c|c|l|}
\hline
no.& $u$                                 & $z_1$                  & $z_2$                   & \hfil Reduced equation\\
\hline
 1.1 & $t^{-1}|t|^\kappa w+\kappa t^{-1}x$ & $|t|^{-\kappa}x$       & $|t|^{-1/2}y$           & $ww_1=\varepsilon w_{22}+\tfrac12z_2w_2-(2\kappa-1)w-\kappa(\kappa-1)z_1$ \\
 1.2 & $t^{-1}w+t^{-1}$                    & $x-\ln|t|$             & $|t|^{-1/2}y$           & $ww_1=\varepsilon w_{22}+\tfrac12z_2w_2+w+1$ \\
 1.3 & $w+t^{-1}x+1$                       & $t^{-1}x-\ln|t|$       & $|t|^{-1/2}y$           & $ww_1=\varepsilon w_{22}+\tfrac12z_2w_2-w-1$ \\
 1.4 & $xw+\varepsilon'x$                  & $\ln|x|-\varepsilon't$ & $\varepsilon'y-\beta t$ & $ww_1=w_{22}+\beta w_2-(w+\varepsilon')^2$ \\
 1.5 & $xw$                                & $t$                    & $y+\delta\ln|x|$        & $w_1+\delta ww_2=w_{22}-w^2$ \\
 1.6 & $w+\delta t$                        & $x-\frac12\delta t^2$  & $y-\delta't$            & $ww_1=w_{22}+\delta'w_2-\delta$ \\
 1.7 & $w+y$                               & $t$                    & $x-ty$                  & $w_1+ww_2=z_1^2w_{22}$ \\
 1.8 & $t^{-1}w+t^{-1}x$                   & $t$                    & $y$                     & $w_1=w_{22}$ \\
 1.9 & $w$                                 & $t$                    & $y$                     & $w_1=w_{22}$ \\
1.10 & $w$                                 & $t$                    & $x+y$                   & $w_1+ww_2=w_{22}$ \\
1.11 & $w$                                 & $t$                    & $x$                     & $w_1+ww_2=0$ \\
\hline
\end{tabular}
\end{center}
Here $\varepsilon=\sgn t$, $\varepsilon'=\pm1$, $\beta\geqslant0$, $\delta,\delta'\in\{0,1\}$,
$w=w(z_1,z_2)$ is the new unknown function of the invariant independent variables $(z_1,z_2)$.
The subscripts~1 and~2 of~$w$ denote derivatives with respect to~$z_1$ and~$z_2$, respectively.
\end{table}


The maximal Lie invariance algebras of the reduced equations presented in Table~1 are respectively
\begin{gather*}
\mathfrak a_1=\langle\tilde D^1\rangle \quad\mbox{if}\quad \kappa\notin\{0,1\} \qquad\mbox{and}\qquad
\mathfrak a_1=\langle\tilde P^1,\ \tilde D^1\rangle \quad\mbox{if}\quad \kappa\in\{0,1\};
\\
\mathfrak a_2=\langle\tilde P^1 \rangle;
\qquad
\mathfrak a_3=\langle\tilde P^1\rangle; \qquad \mathfrak a_4=\langle\tilde P^1,\ \tilde P^2\rangle;
\\
\mathfrak a_5=\langle\tilde P^1,\,\tilde P^2\rangle \quad\mbox{if}\quad \delta=1 \qquad\mbox{and}\qquad
\mathfrak a_5=\langle\tilde P^1,\,\tilde P^2,\,2z_1\p_{z_1}+z_2\p_{z_2}-2w\p_w\rangle \quad\mbox{if}\quad \delta=0;
\\
\mathfrak a_6=\langle\tilde P^1,\,\tilde P^2\rangle \quad\mbox{if}\quad \delta=\delta'=1,\qquad
\mathfrak a_6=\langle\tilde P^1,\,\tilde P^2,\,4\tilde D^1+\tilde D^2\rangle \quad\mbox{if}\quad \delta=1,\ \delta'=0,\\
\mathfrak a_6=\langle\tilde P^1,\,\tilde P^2,\,\tilde D^1\rangle \quad\mbox{if}\quad \delta=0,\ \delta'=1 \qquad\mbox{and}\qquad
\mathfrak a_6=\langle\tilde P^1,\,\tilde P^2,\,\tilde D^1,\,\tilde D^2\rangle \quad\mbox{if}\quad \delta=\delta'=0;\\
\mathfrak a_7=\langle\p_{z_2},\ z_1\p_{z_2}-\p_w,\ 2z_1\p_{z_1}+3z_2\p_{z_2}+w\p_w\rangle;\\
\mathfrak a_8=\mathfrak a_9=\langle\p_{z_1},\ \p_{z_2},\ 2z_1\p_{z_2}-z_2w\p_w,\
2z_1\p_{z_1}+z_2\p_{z_2},\ 4z_1^2\p_{z_1}+4z_1z_2\p_{z_2}-(z_2^2+2z_1)w\p_w,\\ \phantom{\mathfrak a_8=\mathfrak a_9=\langle}
w\p_w,\ f(z_1,z_2)\p_w\rangle;\\
\mathfrak a_{10}=\langle\p_{z_1},\ \p_{z_2},\ z_1\p_{z_2}-\p_w,\ 2z_1\p_{z_1}+z_2\p_{z_2}-w\p_w,\
z_1^2\p_{z_1}+z_1z_2\p_{z_2}-(z_1w+z_2)\p_w\rangle;\\
\mathfrak a_{11}=\langle\tau(z_1,z_2,w)(\p_{z_1}+w\p_{z_2}),\ \xi(z_2-wz_1,w)\p_{z_2},\ \eta(z_2-wz_1,w)(z_1\p_{z_2}+\p_w)\rangle.
\end{gather*}
Here the function $f=f(z_1,z_2)$ runs through the solution set of the heat equation $f_{z_1}=f_{z_2z_2}$,
$\tau$, $\xi$ and~$\eta$ are arbitrary smooth functions of their arguments,
and we denote
\begin{gather*}
\tilde P^1=\p_{z_1},\quad
\tilde P^2=\p_{z_2},\quad
\tilde D^1=z_1\p_{z_1}+w\p_w,\quad
\tilde D^2=z_2\p_{z_2}-2w\p_w.
\end{gather*}
The above results have been checked using the package \texttt{DESOLV}
for symbolic calculations of Lie symmetries~\cite{carm2000a,vu2007a}.

The (1+2)-dimensional degenerate Burgers equation~\eqref{eq:(1+2)DDegenerateBurgersEq}
admits \emph{additional}~\cite{olve1993b} or \emph{hidden}~\cite{abra2008a} symmetries
with respect to a Lie reduction if the corresponding reduced equation possesses Lie symmetries
that are not induced by Lie symmetries of the original equation~\eqref{eq:(1+2)DDegenerateBurgersEq}.
Note that the first example of such symmetries was constructed in~\cite{kapi1978a}
for the axisymmetric reduction of the incompressible Euler equations,
see also the discussion of this example in~\cite[Example~3.5]{olve1993b}.
The comprehensive study of hidden symmetries of an important model was first carried out
for the Navier--Stokes equations in~\cite{fush1994a,fush1994b}.
To check which Lie symmetries of reduced equations~1.1--1.11 are induced
by Lie symmetries of the original equation~\eqref{eq:(1+2)DDegenerateBurgersEq},
for each $m\in\{1,\dots,11\}$ we compute the normalizer of the subalgebra~$\mathfrak g_{1.m}$
in the algebra~$\mathfrak g$,
\[{\rm N}_{\mathfrak g}(\mathfrak g_{1.m})=\{Q\in\mathfrak g\mid [Q,Q']\in\mathfrak g_{1.m}\ \mbox{for all}\ Q'\in\mathfrak g_{1.m}\}.\]
The algebra of induced Lie symmetries of reduced equation~1.$m$
is isomorphic to the quotient algebra ${\rm N}_{\mathfrak g}(\mathfrak g_{1.m})/\mathfrak g_{1.m}$.
Therefore, all Lie symmetries of reduced equation~1.$m$
are induced by Lie symmetries of the original Burgers equation~\eqref{eq:(1+2)DDegenerateBurgersEq}
if and only if  $\dim\mathfrak a_m=\dim{\rm N}_{\mathfrak g}(\mathfrak g_{1.m})-1$.
Thus, respectively
\begin{gather*}
{\rm N}_{\mathfrak g}(\mathfrak g_{1.1}^\kappa)=\langle D^t,D^x\rangle,\,\langle D^t,D^x,P^x\rangle,\,\langle D^t,D^x,G^x\rangle
\quad\mbox{if}\quad \kappa\notin\{0,1\},\ \kappa=0,\ \kappa=1,
\\
{\rm N}_{\mathfrak g}(\mathfrak g_{1.2})=\langle D^t,P^x\rangle,\quad
{\rm N}_{\mathfrak g}(\mathfrak g_{1.3})=\langle D^t+2D^x,G^x\rangle,\quad
\smash{{\rm N}_{\mathfrak g}(\mathfrak g_{1.4}^{\varepsilon'\beta})}=\langle D^x,P^t,P^y\rangle,
\\
{\rm N}_{\mathfrak g}(\mathfrak g_{1.5}^\delta)=\langle D^x,P^t,P^y\rangle,\,\langle D^t,D^x,P^t,P^y\rangle
\quad\mbox{if}\quad \delta=1,\ \delta=0,
\\
\smash{{\rm N}_{\mathfrak g}(\mathfrak g_{1.6}^{\delta\,\delta'})}=
\langle P^t+\delta G^x,P^y,P^x\rangle,\,\langle D^x,P^t+\delta G^x,P^y,P^x\rangle,\,
\langle D^t+4D^x,P^t+\delta G^x,P^y,P^x\rangle,\\ \quad \langle D^t,D^x,P^t+\delta G^x,P^y,P^x\rangle
\quad\mbox{if}\quad\delta=\delta'=1,\ (\delta,\delta')=(0,1),\ (\delta,\delta')=(1,0),\ \delta=\delta'=0,
\\
{\rm N}_{\mathfrak g}(\mathfrak g_{1.7})=\langle D^t+3D^x,G^x,P^y,P^x\rangle,\quad
{\rm N}_{\mathfrak g}(\mathfrak g_{1.8})=\langle D^t,D^x,G^x,P^y,P^x\rangle,
\\
{\rm N}_{\mathfrak g}(\mathfrak g_{1.9})=\mathfrak g,\quad
{\rm N}_{\mathfrak g}(\mathfrak g_{1.10})=\langle D^t+D^x,P^t,G^x,P^y,P^x\rangle,\quad
{\rm N}_{\mathfrak g}(\mathfrak g_{1.11})=\mathfrak g.
\end{gather*}

Comparing the dimensions of ${\rm N}_{\mathfrak g}(\mathfrak g_{1.m})$ and~$\mathfrak a_m$,
we conclude that
all Lie symmetries of reduced equations~1.1--1.7 are induced by Lie symmetries
of the original Burgers equation~\eqref{eq:(1+2)DDegenerateBurgersEq},
but this is not the case for reduced equations~1.8--1.11.
The subalgebras of induced symmetries in the algebras~$\mathfrak a_8$--$\mathfrak a_{11}$ are respectively
\begin{gather*}
\tilde{\mathfrak a}_8=\langle\p_{z_1},\ 2z_1\p_{z_1}+z_2\p_{z_2},\ w\p_w,\ \p_w\rangle,\quad
\tilde{\mathfrak a}_9=\langle\p_{z_1},\ \p_{z_2},\ 2z_1\p_{z_1}+z_2\p_{z_2},\ w\p_w,\ \p_w\rangle,\\
\tilde{\mathfrak a}_{10}=\langle\p_{z_1},\ \p_{z_2},\ 2z_1\p_{z_1}+z_2\p_{z_2}-2w\p_w,\ z_1\p_{z_2}+\p_w\rangle,\\
\tilde{\mathfrak a}_{11}=\langle\p_{z_1},\ \p_{z_2},\ 2z_1\p_{z_1}+w\p_w,\ z_2\p_{z_2}+w\p_w,\ z_1\p_{z_2}+\p_w\rangle.
\end{gather*}
For each $i\in\{8,\dots,11\}$, the elements of the complement $\mathfrak a_i\setminus\tilde{\mathfrak a}_i$
of $\tilde{\mathfrak a}_i$ in~$\mathfrak a_i$ are nontrivial hidden symmetries of the equation~\eqref{eq:(1+2)DDegenerateBurgersEq}
associated with reduction~1.$i$.

Therefore, the study of further Lie reductions of reduced equations~1.1--1.7 to ordinary differential equations is needless
since it is more efficient to directly reduce the equation~\eqref{eq:(1+2)DDegenerateBurgersEq} to ordinary differential equations
using two-dimensional subalgebras of the algebra~$\mathfrak g$,
which is done in Section~\ref{sec:BurgersSystemLieReductionOfCodim2}.
In general, each direct Lie reduction of codimension two corresponds to several two-step reductions.
More specifically, the Lie reduction with respect to a two-dimensional algebra $\langle Q_1,Q_2\rangle$,
where $[Q_1,Q_2]=Q_2$, is equivalent to any two-step reduction from the following two families:
\begin{itemize}\itemsep=0ex
\item
the Lie reduction with respect to the subalgebra $\langle Q_2\rangle$
and the successive Lie reduction of the constructed reduced (1+1)-dimensional partial differential equation~$\mathcal R_1$
with respect to the subalgebra $\langle\tilde Q_1\rangle$,
where $\tilde Q_1$ is the Lie symmetry vector field of~$\mathcal R_1$ induced by~$Q_1$;
\item
the Lie reduction with respect to the subalgebra $\langle Q_1+cQ_2\rangle$
with an arbitrary constant~$c$
and then the nonclassical reduction of the constructed reduced (1+1)-dimensional partial differential equation~$\mathcal R_2$
with respect to the reduction operator $\tilde Q_2$ of~$\mathcal R_2$ induced by~$Q_2$.
\end{itemize}
Moreover, any two-step reduction whose first step is equivalent to one of reductions~1.8--1.11
and thus all equivalent reductions of codimension two have no sense
since reduced equations~1.8--1.11 are famous and well-studied equations,
and it suffices to use known results for them.
We can also recognize, up to simple point transformations,
some classical diffusion--convection--reaction equations
among reduced equations~1.1--1.7.

Below we discuss the reduced equations given in Table~\ref{tab:LieReductionsOfCodim1}
and present solutions of the equation~\eqref{eq:(1+2)DDegenerateBurgersEq}
which are found using the known solutions of these reduced equations.

\medskip\par\noindent
{\bf 1.7.} Reduced equation~1.7 is a member of the class of generalized Burgers equations $u_t+uu_x+f(t,x)u_{xx}=0$,
which was studied within the framework of symmetry analysis of differential equations
in \cite{poch2012a,poch2017a,poch2014a,VSL2015}; see therein for other references and a discussion of existing results.
The value $f=t^2$ corresponding to reduced equation~1.7 arises jointly with other degrees of~$t$
as a case of Lie symmetry extension within the above class.
No interesting explicit solutions are known for such values of~$f$.

\medskip\par\noindent
{\bf 1.8--1.10.}
Both reduced equations~1.8 and~1.9 coincide with the (1+1)-dimensional linear heat equation.
Reduced equation~1.10 is no other than the Burgers equation,
which is related to the heat equation via the Cole--Hopf transformation.
The corresponding invariant solutions of the equation~\eqref{eq:(1+2)DDegenerateBurgersEq}
are expressed in terms of an arbitrary solution $\theta=\theta(t,z)$
of the heat equation $\theta_t=\theta_{zz}$,
\begin{gather}\label{eq:(1+2)DDegenerateBurgersEqLieSolsInTermsOfGenSolOfHeatEq}
 1.8.\ u=\frac xt+\frac1t\theta(t,y),\quad
 1.9.\ u=\theta(t,y),\quad
1.10.\ u=-2\frac{\theta_z(t,z)}{\theta(t,z)}\mbox{ \ with \ }z=x+y.
\end{gather}
The first two solution families are generalized in Section~\ref{sec:CommonSolutionsOf(1+2)DDegenerateAndNondegenerateBurgersEqs}.
See Section~\ref{sec:ExactSolutionsOfHeatEq} for an enhanced collection of explicit solutions of the heat equation.

\medskip\par\noindent
{\bf 1.11.}
Reduced equation~1.11 is the transport equation, also called the inviscid Burgers equation.
Its general solution can be found in an implicit form,
giving a family of solutions of~\eqref{eq:(1+2)DDegenerateBurgersEq} also in an implicit form,
\[
1.11.\ F(u,x-ut)=0,
\]
where $F$ is an arbitrary nonconstant sufficiently smooth function of its arguments.

\medskip\par\noindent
{\bf 1.5.}
Reduced equation~1.5 with $\delta=0$ is the nonlinear heat equation with purely quadratic nonlinearity.
Within the symmetry analysis of differential equations,
it first appears in~\cite{doro1982a} in the course of group classification of the class of nonlinear heat equations
as a regular representative of the subclass of equations with power nonlinearities.
Exact solutions of this equation were constructed in~\cite{bara2002d} (see also \cite[Section~5.1.1.1.1]{Polyanin&Zaitsev2012}).
We use them in Section~\ref{sec:CommonSolutionsOf(1+2)DDegenerateAndNondegenerateBurgersEqs}
to construct wider families of solutions of~\eqref{eq:(1+2)DDegenerateBurgersEq} 
than the related family of $\mathfrak g_{1.5}^0$-invariant solutions.
Regular reduction operators of reduced equation~1.5 with $\delta=1$ were described in~\cite{cher2018a}.
Using Lie and nonclassical symmetries, one can construct the following inequivalent solutions for this equation:
\[
w=\frac{-2}{z_2-2z_1}, \quad
w=\frac{1\pm {\rm e}^{z_1-z_2}}{z_1}, \quad 
w=\frac1{z_1}, \quad
w=\pm {\rm e}^{z_1-z_2}.
\]
The derivation of the corresponding solutions of the (1+2)-dimensional degenerate Burgers equation~\eqref{eq:(1+2)DDegenerateBurgersEq}
\[
u=\frac{-2x}{y+\ln|x|-2t}, \quad u=\frac{x\pm {\rm e}^{t-y}}t, \quad u=\frac xt, \quad u=\pm {\rm e}^{t-y}
\]
is not significant
since a solution equivalent to the first one is obtained below in Section~\ref{sec:BurgersSystemLieReductionOfCodim2}
using a direct Lie reduction of~\eqref{eq:(1+2)DDegenerateBurgersEq} to an ordinary differential equation,
and each of the other solutions is of one of the forms~\eqref{eq:(1+2)DDegenerateBurgersEqLieSolsInTermsOfGenSolOfHeatEq}.

\medskip\par\noindent
{\bf 1.1--1.4, 1.6.}
Using the substitution $w=v^{1/2}$, we can represent reduced equations 1.1--1.4 and 1.6
in the standard form for reaction--diffusion--convection equations,
\begin{gather*}
1.1'.\  v_1=\varepsilon(v^{-1/2}v_2)_2+\tfrac12z_2v^{-1/2}v_2-2(2\kappa-1)v^{1/2}-2\kappa(\kappa-1)z_1,\\
1.2'.\  v_1=\varepsilon(v^{-1/2}v_2)_2+\tfrac12z_2v^{-1/2}v_2+2v^{1/2}+2,\\
1.3'.\  v_1=\varepsilon(v^{-1/2}v_2)_2+\tfrac12z_2v^{-1/2}v_2-2v^{1/2}-2,\\
1.4'.\  v_1=(v^{-1/2}v_2)_2+\beta v^{-1/2}v_2-2v-4\varepsilon'v^{1/2}-2,\\
1.6'.\  v_1=(v^{-1/2}v_2)_2+\delta'v^{-1/2}v_2-2\delta.
\end{gather*}
Equation~1.6$'$ with $\delta'=\delta=0$ is
the nonlinear diffusion equation with power nonlinearity of degree~$-1/2$, $v_t=(v^{-1/2}v_x)_x$.
Within the symmetry analysis of differential equations,
it first appeared in~\cite{ovsi1959a} in the course of group classification of the class of nonlinear diffusion equations
as a regular representative of the subclass of equations with power diffusivity.
It was singled out due to possessing a nonclassical generalized reduction in~\cite{amer1990,king1992a}.
See Section~\ref{sec:ExactSolutionsOfNDCEs} for known and new exact solutions of equations~1.6$'$ with $\delta=0$.
Lie symmetries, regular reduction operators and exact solutions of classes
of diffusion--convection--reaction equations including equations~1.4$'$ and~1.6$'$
were considered in~\cite{cher2018a}; see also references therein.

\noprint{
Book \cite{cher2018a}, ``Murray'' equation (2.204), Eqs. (3.77), (3.79)\\
(4.17) in the context of [19]\\ (4.116), (4.117)\\ Table 5.1 on p. 209
}

\section{Lie reductions of codimension two}\label{sec:BurgersSystemLieReductionOfCodim2}

Similarly to one-dimensional subalgebras, we construct
a complete list of two-dimensional $G$-inequivalent subalgebras of the algebra~$\mathfrak g$,
\begin{gather*}
\langle D^t,                           D^x            \rangle,\quad
\langle D^t+2\kappa D^x,               P^t            \rangle,\quad
\langle D^t+2P^x,                      P^t            \rangle,\quad
\langle D^t+4D^x,                      P^t+G^x        \rangle,\\[1ex]
\langle D^t+2\kappa D^x,               G^x            \rangle,\quad
\langle D^t+2P^x,                      G^x            \rangle,\quad
\langle D^t+3D^x,                      G^x+P^y        \rangle,\\[1ex]
\langle D^t+2\kappa D^x,               P^y            \rangle,\quad
\langle D^t+2P^x,                      P^y            \rangle,\quad
\langle D^t+2D^x+2G^x,                 P^y            \rangle,\\[1ex]
\langle D^t+2\kappa D^x,               P^x            \rangle,\quad
\langle D^t+2D^x+2G^x,                 P^x            \rangle,\\[1ex]
\langle D^x+\beta P^y,                 P^t+P^y        \rangle,\quad
\langle D^x-\delta P^y,                P^t            \rangle_{\delta\in\{0,1\}},\\[1ex]
\langle D^x-P^y,                       G^x            \rangle,\quad
\langle D^x,                           G^x            \rangle,\quad
\langle D^x-\varepsilon'P^t,           P^y            \rangle_{\varepsilon'=\pm1},\quad
\langle D^x,                           P^y            \rangle,\\[1ex]
\langle D^x-\varepsilon'P^t+\beta P^y, P^x            \rangle_{\varepsilon'=\pm1,\beta\geqslant0},\quad
\langle D^x+P^y,                       P^x            \rangle,\quad
\langle D^x,                           P^x            \rangle,\\[1ex]
\langle P^t+\delta G^x,                P^y-\delta'P^x \rangle_{\delta,\delta'\in\{0,1\}},\quad
\langle P^t+\delta G^x+\delta'P^y,     P^x            \rangle_{\delta,\delta'\in\{0,1\}},\\[1ex]
\langle G^x,                           P^y+\delta'P^x \rangle_{\delta'\in\{0,1\}},\quad
\langle G^x+P^y,                       P^x            \rangle,\quad
\langle G^x,                           P^x            \rangle,\quad
\langle P^y,                           P^x            \rangle.
\end{gather*}

The subalgebras $\langle D^x,G^x\rangle$, $\langle D^x,P^x\rangle$ and $\langle G^x,P^x\rangle$
cannot be used for constructing ansatzes for~$u$
since the matrices of the $(t,x,y)$-components of the corresponding basis elements are of rank one.

For each of the other listed subalgebras,
the presented basis $(Q_1,Q_2)$ satisfies the condition $[Q_1,Q_2]\in\langle Q_2\rangle$.
This means that the Lie reduction of the equation~\eqref{eq:(1+2)DDegenerateBurgersEq}
with respect to the subalgebra $\langle Q_1,Q_2\rangle$ is equivalent to a two-step Lie reduction.
The first step is the Lie reduction of the equation~\eqref{eq:(1+2)DDegenerateBurgersEq}
with respect to the subalgebra $\langle Q_2\rangle$ to a partial differential equation~$\mathcal E$.
The Lie-symmetry vector field~$Q_2$ of the equation~\eqref{eq:(1+2)DDegenerateBurgersEq}
induces a Lie-symmetry vector field~$\tilde Q_2$ of the equation~$\mathcal E$.
Then the second step is the Lie reduction of the equation~$\mathcal E$
with respect to the algebra $\langle\tilde Q_2\rangle$ to an ordinary differential equation.
The equation~$\mathcal E$ is one of reduced equations~1.5, 1.6$_{\delta\delta'=0}$, 1.7--1.11.
Reduced equations 1.8--1.11 are comprehensively studied within the framework of group analysis of differential equations
and wide families of their exact solutions were constructed in the literature;
see the discussion in Section~\ref{sec:BurgersSystemLiereductionsOfCodim1}.
This is why codimension-two Lie reductions of the equation~\eqref{eq:(1+2)DDegenerateBurgersEq}
with respect to the listed two-dimensional subalgebras
that contain one of the vector fields~$G^x$, $P^x$, $P^y-P^x$ and~$P^y$
are needless.

Although reduced equations~1.5--1.7 were also intensively studied,
the number of their explicit solutions presented in the literature is less than one may expect.
For a proper arrangement of solutions of the equation~\eqref{eq:(1+2)DDegenerateBurgersEq}
that are related to further Lie reductions of reduced equations~1.5--1.7,
it is worthwhile to carry out codimension-two Lie reductions of the equation~\eqref{eq:(1+2)DDegenerateBurgersEq}
with respect to the listed two-dimensional subalgebras without the above property.
It is convenient to rearrange these subalgebras into the following list:
\begin{gather*}
\mathfrak g_{2.1}           =\langle D^t,             D^x    \rangle,\quad
\mathfrak g_{2.2}^\kappa    =\langle D^x+2\kappa D^t, P^t    \rangle_{\kappa\ne0},\quad
\mathfrak g_{2.3}           =\langle D^t,             P^t    \rangle,\\[1ex]
\mathfrak g_{2.4}           =\langle D^t+2P^x,        P^t    \rangle,\quad
\mathfrak g_{2.5}           =\langle D^t+4D^x,        P^t+G^x\rangle,\quad
\mathfrak g_{2.6}           =\langle D^t+3D^x,        G^x+P^y\rangle,\\[1ex]
\smash{
\mathfrak g_{2.7}^\beta     =\langle D^x+\beta P^y,   P^t+P^y\rangle,\quad
\mathfrak g_{2.8}^{\delta''}=\langle D^x-\delta''P^y, P^t    \rangle_{\delta''\in\{0,1\}}.
}
\end{gather*}

In Table~\ref{tab:LieReductionsOfCodim2}, for each of these subalgebras,
we present an ansatz constructed for $u$, the corresponding reduced ordinary differential equation
and the optimal equivalent two-step Lie reduction,
where $1.i$ is the number of the first-step Lie reduction of codimension one
to a partial differential equation,
and $\langle\dots\rangle$ is the induced one-dimensional Lie symmetry subalgebra
of reduced equation $1.i$ to be used for the second-step Lie reduction.

\begin{table}[!ht]\footnotesize
\caption{Essential Lie reductions of codimension two}
\label{tab:LieReductionsOfCodim2}
\begin{center}
\renewcommand{\arraystretch}{1.6}
\begin{tabular}{|c|c|c|l|l|}
\hline
\!no.\!& $u$                                          & $\omega$            & \hfil Reduced equation                                                                                    & \hfil Two-step Lie reduction\\
\hline
2.1  &    $xt^{-1}\varphi$                                    & $|t|^{-1/2}y$       & $\varepsilon\varphi_{\omega\omega}-\varphi^2+\frac12\omega\varphi_\omega+\varphi=0$                       & 1.5$_{\delta=0}$, $\langle\tilde D^0\rangle$\\
2.2  &    $x|x|^{-2\kappa}\varphi$                            & $|x|^{-\kappa}y$    & $\varphi_{\omega\omega}+\kappa\omega\varphi\varphi_\omega+(2\kappa-1)\varphi^2=0$\!\!                     & 1.6$_{\delta=\delta'=0}$, $\langle\tilde D^2+2\kappa\tilde D^1\rangle$\!\\
2.3  &    $y^{-2}\varphi$                                     & $x$                 & $\varphi(\varphi_\omega-6)=0$                                                                             & 1.6$_{\delta=\delta'=0}$, $\langle\tilde D^2\rangle$\\
2.4  &    ${\rm e}^{-x}\varphi$                               & ${\rm e}^{-x/2}y$   & $\varphi_{\omega\omega}+\frac12\omega\varphi\varphi_\omega+\varphi^2=0$                                   & 1.6$_{\delta=\delta'=0}$, $\langle\tilde D^2+2\tilde P^1\rangle$\\
2.5  &    $|x-t^2/2|^{1/2}\varphi+t$                          & $|x-t^2/2|^{-1/4}y$ & $\varphi_{\omega\omega}+\frac14\varepsilon'\omega\varphi\varphi_\omega-\frac12\varepsilon'\varphi^2=1$    & 1.6$_{\delta=1,\delta'=0}$, $\langle\tilde D^2+4\tilde D^1\rangle$\!\\
2.6  &\!\!$t^{-1}|t|^{3/2}\varphi+\frac32t^{-1}x-\frac12y$\!\!& $|t|^{-3/2}(x-ty)$  & $\varepsilon\varphi_{\omega\omega}-\omega\varphi\varphi_\omega-2\varphi=\frac34\omega$                    & 1.7, $\langle\tilde D^3\rangle$\\
2.7  &    $x\varphi$                                          & $y-\beta\ln|x|-t$   & $\varphi_{\omega\omega}+\beta\varphi\varphi_\omega-\varphi^2+\varphi_\omega=0$                            & 1.6$_{\delta=0,\delta'=1}$, $\langle\tilde D^1+\beta\tilde P^2\rangle$\!\\
2.8  &    $x\varphi$                                          & $y+\delta''\ln|x|$  & $\varphi_{\omega\omega}-\delta''\varphi\varphi_\omega-\varphi^2=0$                                        & 1.6$_{\delta=\delta'=0}$, $\langle\tilde D^1-\delta''\tilde P^2\rangle$\\
\hline
\end{tabular}
\end{center}
Here $\varepsilon=\sgn t$, $\varepsilon'=\sgn(x-t^2/2)$, $\delta''\in\{0,1\}$, $\beta\in\mathbb R$,
$\varphi=\varphi(\omega)$ is the new unknown function of the invariant independent variable~$\omega$.
$\tilde D^0:=2z_1\p_{z_1}+z_2\p_{z_2}-2w\p_w$ and $\tilde D^3:=2z_1\p_{z_1}+3z_2\p_{z_2}+w\p_w$.
\end{table}

We find (nonzero) exact solutions of some of the constructed reduced equations
and present the associated exact solutions of the equation~\eqref{eq:(1+2)DDegenerateBurgersEq}.
Hereafter $C_0$, $C_1$ and $C_2$ are arbitrary constants,
and, except Case~2.8, the constant tuple $(C_1,C_2)$ is nonzero and defined up to a~nonzero multiplier whenever it appears.

\medskip\par\noindent
{\bf 2.2.}
The maximal Lie invariance algebra of reduced equation~2.2 is
$\langle\omega\p_\omega-2\varphi\p_\varphi\rangle$ since $\kappa\ne0$.
It is induced by the maximal Lie invariance algebra~$\mathfrak g$ of the initial equation~\eqref{eq:(1+2)DDegenerateBurgersEq}
and by the maximal Lie invariance algebra~$\mathfrak g$ of reduced equation 1.6$_{\delta=\delta'=0}$ as well.

Reduced equation~2.2 with $\kappa=2/3$ can be represented in the form $\varphi_{\omega\omega}+\frac13(\omega\varphi^2)_\omega=0$
and thus can be integrated once to the parameterized Riccati equation $\varphi_\omega+\frac13\omega\varphi^2=3C_0^3$,
where the integration constant is represented as $3C_0^3$ for convenience of the further integration.
At the same time, we can set $C_0\in\{0,1\}$ modulo induced scaling symmetry transformations of reduced equation~2.2.
For $C_0=0$, the above Riccati equation is directly integrated by the separation of variables to $\varphi=6/(\omega^2+C_1)$,
where $C_1\in\{-1,0,1\}$ modulo induced scalings.
For $C_0=1$, 
this equation is reduced by the substitution $\varphi=3\omega^{-1}\psi_\omega/\psi$
to the equation $\psi_{\omega\omega}-\omega^{-1}\psi_\omega=\omega\psi$
for the first derivatives of the Airy functions, and hence its general solution is
\[
\varphi=3\frac{C_1\mathop{\rm Ai}(\omega)+C_2\mathop{\rm Bi}(\omega)}{C_1\mathop{\rm Ai}'(\omega)+C_2\mathop{\rm Bi}'(\omega)},
\]
where the Airy wave functions $\mathop{\rm Ai}$ and $\mathop{\rm Bi}$ are linearly independent solutions
of the Airy equation $\tilde\psi_{\omega\omega}=\omega\tilde\psi$.
In total, this leads to the following solutions of the equation~\eqref{eq:(1+2)DDegenerateBurgersEq}:
\[
u=\frac{6x}{y^2+C_1x^{4/3}},\ C_1\in\{-1,0,1\},
\quad
u=3x^{-1/3}\frac
{C_1\mathop{\rm Ai}(x^{-2/3}y)+C_2\mathop{\rm Bi}(x^{-2/3}y)}
{C_1\mathop{\rm Ai}'(x^{-2/3}y)+C_2\mathop{\rm Bi}'(x^{-2/3}y)}.
\]

After multiplying by~$\omega$, reduced equation~2.2 with $\kappa=1$
can be represented in the form $(\omega\varphi_\omega-\varphi)_\omega+\frac12(\omega^2\varphi^2)_\omega=0$
and can hence be integrated once to the parameterized Riccati equation $\omega\varphi_\omega-\varphi+\frac12\omega^2\varphi^2=2C_0$.
We can set $C_0\in\{-1,0,1\}$ modulo induced scaling symmetry transformations of reduced equation~2.2.
The differential substitution $\varphi=\omega^{-1}\psi_\omega/\psi$ reduces the latter equation
to the second-order linear ordinary differential equation
$\omega\psi_{\omega\omega}-2\psi_\omega-C_0\omega\psi=0$.
Integrating the derived equation depending on values of~$C_0$, $0$, $1$ and $-1$, respectively,
and substituting the results of the integration into the above differential substitution,
we construct all inequivalent solutions of reduced equation~2.2 with $\kappa=1$,
\begin{gather*}
\varphi=\frac{6\omega}{\omega^3+C_1},\ C_1\in\{0,1\},\quad
\varphi=2\frac{C_1{\rm e}^\omega-C_2{\rm e}^{-\omega}}{C_1{\rm e}^\omega(\omega-1)+C_2{\rm e}^{-\omega}(\omega+1)},\\[1ex]
\varphi=2\frac{-C_1\sin\omega+C_2\cos\omega}{C_1(\omega\cos\omega-\sin\omega)+C_2(\omega\sin\omega+\cos\omega)}.
\end{gather*}
The corresponding solutions of the equation~\eqref{eq:(1+2)DDegenerateBurgersEq} are
\begin{gather*}
u=\frac{6xy}{C_1x^3+y^3},\ C_1\in\{0,1\},\quad
u=2\frac{C_1{\rm e}^{y/x}-C_2{\rm e}^{-y/x}}{C_1{\rm e}^{y/x}(y-x)-C_2{\rm e}^{-y/x}(y+x)},\\[1ex]
u=2\frac{-C_1\sin(y/x)+C_2\cos(y/x)}{C_1(y\cos(y/x)-x\sin(y/x))+C_2(y\sin(y/x)+x\cos(y/x))}.
\end{gather*}

\medskip\par\noindent
\noindent
{\bf 2.3.}
Reduced equation~2.3 trivially integrates to $\varphi=6(\omega+C_0)$.
The associated solution $u=6(x+C_0)y^{-2}$ of the equation~\eqref{eq:(1+2)DDegenerateBurgersEq},
where $C_0=0\bmod G$, is significantly generalized
in Section~\ref{sec:CommonSolutionsOf(1+2)DDegenerateAndNondegenerateBurgersEqs}
to the solution~\eqref{eq:(1+2)DDegenerateBurgersEqSolutionSetIn1SolutionOfLHEq}.

\medskip\par\noindent
{\bf 2.7.}
For any~$\beta$,
the maximal Lie invariance algebra of reduced equation~2.7 is $\langle\p_\omega\rangle$,
and it is induced by the maximal Lie invariance algebra~$\mathfrak g$ of the equation~\eqref{eq:(1+2)DDegenerateBurgersEq}
and by the maximal Lie invariance algebra of reduced equation 1.6$_{\delta=0,\delta'=1}$ as well.

We multiply reduced equation~2.7 with $\beta=-2$ by ${\rm e}^\omega$ and then integrate once,
successively deriving $({\rm e}^\omega\varphi_\omega)_\omega=({\rm e}^\omega\varphi^2)_\omega$ and
$\varphi_\omega=\varphi^2+C_0{\rm e}^{-\omega}$.
We can set $C_0\in\big\{-\frac14,0,\frac14\big\}$ modulo the induced shift symmetry transformations of reduced equation~2.7.
Then we use the standard differential substitution $\varphi=-\psi_\omega/\psi$
to reduce the later, Riccati, equation to the second-order linear ordinary differential equation
$\psi_{\omega\omega}+C_0{\rm e}^{-\omega}\psi=0$, which is solved in terms of
elementary functions, Bessel functions~$J$ and~$Y$ or modified Bessel functions~$I$ and~$K$ if $C_0=0,-\frac14,\frac14$, respectively.
The computation results in the complete set of inequivalent solutions of reduced equation~2.7 with $\beta=-2$,
\begin{gather*}
\varphi=-\frac1\omega,\quad
\varphi=-\frac12{\rm e}^{-\omega/2}\frac{C_1J_1({\rm e}^{-\omega/2})+C_2Y_1({\rm e}^{-\omega/2})}{C_1J_0({\rm e}^{-\omega/2})+C_2Y_0({\rm e}^{-\omega/2})},\\[1ex]
\varphi= \frac12{\rm e}^{-\omega/2}\frac{C_1I_1({\rm e}^{-\omega/2})-C_2K_1({\rm e}^{-\omega/2})}{C_1I_0({\rm e}^{-\omega/2})+C_2K_0({\rm e}^{-\omega/2})}.
\end{gather*}
Using these solutions and ansatz~2.7, we construct explicit solutions of the equation~\eqref{eq:(1+2)DDegenerateBurgersEq},
\begin{gather*}
u=-\frac x{y+2\ln|x|-t},\quad
u=-\frac12{\rm e}^{(t-y)/2}\frac{C_1J_1(x^{-1}{\rm e}^{(t-y)/2})+C_2Y_1(x^{-1}{\rm e}^{(t-y)/2})}{C_1J_0(x^{-1}{\rm e}^{(t-y)/2})+C_2Y_0(x^{-1}{\rm e}^{(t-y)/2})},\\[1ex]
u= \frac12{\rm e}^{(t-y)/2}\frac{C_1I_1(x^{-1}{\rm e}^{(t-y)/2})-C_2K_1(x^{-1}{\rm e}^{(t-y)/2})}{C_1I_0(x^{-1}{\rm e}^{(t-y)/2})+C_2K_0(x^{-1}{\rm e}^{(t-y)/2})}.
\end{gather*}

\medskip\par\noindent
{\bf 2.8.}
Reduced equation~2.8 with $\delta''=0$ is $\varphi_{\omega\omega}=\varphi^2$.
Its maximal Lie invariance algebra $\langle\omega\p_\omega-2\varphi\p_\varphi,\p_\omega\rangle$ is induced
by the maximal Lie invariance algebra~$\mathfrak g$ of the initial equation~\eqref{eq:(1+2)DDegenerateBurgersEq}
and by the maximal Lie invariance algebra of reduced equation 1.6$_{\delta=\delta'=0}$ as well.
The general solution of reduced equation~2.8 with $\delta''=0$ is $\varphi=\wp\left(\omega/\sqrt6+C_2;0,C_1\right)$.
Here $\wp=\wp(z;\textsl{g}_2,\textsl{g}_3)$ is the Weierstrass elliptic function
with the invariants $\textsl{g}_2$ and $\textsl{g}_3$, which are respectively equal to $0$ and~$C_1$ for~$\varphi$. 
This function satisfies the differential equation $(\wp_z)^2 = 4\wp^3-\textsl{g}_2\wp-\textsl{g}_3$.
The corresponding solution \mbox{$u=\wp\big(y/\sqrt6+C_2;0,C_1\big)x$} of the equation~\eqref{eq:(1+2)DDegenerateBurgersEq}
is a particular case of the more general solution~\eqref{eq:(1+2)DDegenerateBurgersEqSolutionsWithLameFunctions} with $\varphi=0$.

\begin{remark}
The order of reduced equations 2.2, 2.4, 2.7 and~2.8 can be lowered using their Lie symmetries,
which leads to Abel equations of the second kind.
(The maximal Lie invariance algebras of the other second-order reduced equations from Table~\ref{tab:LieReductionsOfCodim2}
are zero.)
Thus, reduced equations 2.2 and~2.4 are of the same general form
$\varphi_{\omega\omega}+\alpha\omega\varphi\varphi_\omega+\beta\varphi^2=0$,
where $\alpha$ and~$\beta$ are constants.
The point transformation $\tilde\omega=\ln|\omega|$, $\tilde\varphi=\omega^2\varphi$
hinted by the Lie symmetry vector field $\omega\p_\omega-2\varphi\p_\varphi$ reduces
the last equation to the autonomous equation
\[\tilde\varphi_{\tilde\omega\tilde\omega}-3\tilde\varphi_{\tilde\omega}+6\tilde\varphi
+\alpha\tilde\varphi\tilde\varphi_{\tilde\omega}+(\beta-2\alpha)\tilde\varphi^2=0.\]
The further standard substitution $p(z)=\tilde\varphi_{\tilde\omega}$, $z=\tilde\varphi$
leads to the equation $pp_z-3p+6z+\alpha zp+(\beta-2\alpha)z^2=0$.
Reduced equations~2.7 and~2.8 themselves are autonomous, and the above substitution reduces them to
the equations $pp_z+\beta zp-z^2+p=0$ and $pp_z-\delta''zp-z^2=0$, respectively.
\end{remark}

\section{Generalized symmetries}\label{sec:(1+2)DDegenerateBurgersEqGenSyms}

Generalized symmetries of the (1+2)-dimensional degenerate Burgers equation~\eqref{eq:(1+2)DDegenerateBurgersEq}
are much poorer than those of the (1+1)-dimensional classical or inviscid Burgers equations.

\begin{theorem}\label{thm:(1+2)DDegenerateBurgersEqGenSyms}
The quotient algebra of generalized symmetries of the (1+2)-dimensional degenerate Burgers equation~\eqref{eq:(1+2)DDegenerateBurgersEq}
by the subalgebra of trivial generalized symmetries%
\footnote{\label{fnt:DefOfTrivialGenSyms}%
A generalized symmetry of a system of differential equations~$\mathcal L$ is called \emph{trivial} 
if its characteristic vanishes on the solution set of~$\mathcal L$; see, e.g., \cite[Section~5.1]{olve1993b}.
}
of this equation 
is naturally isomorphic to its maximal Lie invariance algebra~$\mathfrak g$.
\end{theorem}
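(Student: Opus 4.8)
The plan is to pass to generalized symmetries written in evolutionary form and to prove, by a careful inspection of the highest-order terms of the invariance condition, that the order of any such symmetry is bounded, after which the few remaining low-order determining equations reproduce exactly the six-dimensional algebra~$\mathfrak g$ found in Section~\ref{sec:BurgersSystemMLIAlgAndCompleteGroup}.

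First I would fix a normal form. The equation~\eqref{eq:(1+2)DDegenerateBurgersEq} is an evolution equation solved for~$u_t$, hence totally nondegenerate, so every generalized symmetry is equivalent modulo trivial symmetries to one in evolutionary form whose characteristic depends only on $t$, $x$, $y$ and the derivatives $u_{k,l}:=\partial_x^k\partial_y^l u$, $k,l\geqslant0$, up to some finite order; write it as $\eta=\eta(t,x,y,u_{(r)})$. On the infinite prolongation of~\eqref{eq:(1+2)DDegenerateBurgersEq} the invariance condition reads $\mathcal D\eta=0$, where $\mathcal D:=D_t+uD_x+u_x-D_y^2$ and $D_t$ is the total derivative reduced by the equation, so that $D_t u_{k,l}=u_{k,l+2}-D_x^kD_y^l(uu_x)$. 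The asserted isomorphism is the map sending $Q=\tau\partial_t+\xi\partial_x+\zeta\partial_y+\phi\partial_u\in\mathfrak g$ to the class of its characteristic $\eta_Q=\phi-\tau u_t-\xi u_x-\zeta u_y$; it is linear and a Lie-algebra morphism, and the induced map on the quotient is injective because every nonzero element of~$\mathfrak g$ has a characteristic that does not vanish on the solution set of~\eqref{eq:(1+2)DDegenerateBurgersEq} and is therefore not trivial. Thus the entire content of the theorem is the surjectivity: every solution~$\eta$ of $\mathcal D\eta=0$ is, up to a trivial characteristic, of the form~$\eta_Q$ for some $Q\in\mathfrak g$.

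Next comes the order reduction, which is the heart of the proof. In $\mathcal D\eta$ the linear terms of top $y$-order, namely $\sum_{k,l}\eta_{u_{k,l}}u_{k,l+2}$, cancel between $D_t\eta$ and $-D_y^2\eta$, and the contributions $u\,u_{k+1,l}\,\eta_{u_{k,l}}$ cancel between $D_t\eta$ and $uD_x\eta$. After these cancellations the genuinely highest-order part of $\mathcal D\eta$ is the quadratic part of $-D_y^2\eta$, that is, $-2\sum\eta_{u_{k,l}u_{k',l'}}u_{k,l+1}u_{k',l'+1}$, which sits at an order strictly above anything produced by the nonlinearity~$uu_x$; splitting $\mathcal D\eta=0$ with respect to the jet variables therefore forces the second derivatives of~$\eta$ in the maximal-order jet variables to vanish, i.e.\ $\eta$ is affine in them. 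Running the same splitting down the two-index filtration — lowering first the $y$-order and then the $x$-order, and using that multiplication by~$u$ in $D_t$ and in $uD_x$ shifts $(k,l)\mapsto(k+1,l)$ while $D_y^2$ shifts $(k,l)\mapsto(k,l+2)$ and also creates the quadratic cross-terms — reduces~$\eta$ to an expression affine in all derivatives of order $\geqslant1$ with coefficients constrained by the $uu_x$-term; one more pass of the same type lowers the order to that of the characteristics coming from~$\mathfrak g$. What then remains of $\mathcal D\eta=0$ is precisely the classical system of Lie determining equations of~\eqref{eq:(1+2)DDegenerateBurgersEq}, whose solution space is~$\mathfrak g$, and each such solution, rewritten in evolutionary form, is an~$\eta_Q$ as required.

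The main obstacle is this middle step: bookkeeping the interference of the three operators $D_t$, $uD_x$ and $D_y^2$, which act on the bigrading $(k,l)$ in three different ways and contribute both linear and quadratic terms in the jet variables, so that at each order one must separate the contributions that cancel identically from those that genuinely constrain~$\eta$, consistently across all intermediate orders and for both indices of the filtration. The quadratic terms generated by $D_y^2$, which have no analogue in a first-order invariance problem, are what force the affine and then low-order structure of~$\eta$, and organizing these cancellations cleanly is exactly what makes the proof, as anticipated in the introduction, sophisticated and cumbersome.
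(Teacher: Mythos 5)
Your framing is right---pass to reduced evolutionary characteristics $\eta(t,x,y,u_{kl})$, observe that the map $Q\mapsto\eta_Q$ from $\mathfrak g$ into the quotient is injective, and reduce everything to showing that every solution of the determining equation is equivalent to some $\eta_Q$---and your identification of the cancellations between $\mathrm D_t\eta$, $u\mathrm D_x\eta$ and $\mathrm D_y^2\eta$ is correct. The gap is in the descent. Splitting the determining equation with respect to the quadratic monomials $u_{k,l+1}u_{k',l'+1}$ in top-order derivatives does force $\eta$ to be affine in its highest-order arguments, but iterating ``the same splitting'' cannot bound the order of $\eta$: the identical argument applies verbatim to the linear heat equation $u_t=u_{yy}$, whose quotient algebra of generalized symmetries nevertheless contains elements of every order (see the Remark following the theorem). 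Whatever kills the higher-order symmetries here must come from the convection term $uu_x$, and your sketch supplies no mechanism for extracting that constraint; ``one more pass of the same type lowers the order'' is precisely the step that needs a proof.

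The paper closes this gap with two ingredients absent from your outline. First, it proves that $\eta$ is a \emph{polynomial} in the jet variables $u_{kl}$ with $(t,x,y)$-dependent coefficients: differentiating the determining equation with respect to $u_{\kappa,\lambda+1}$ yields the triangular system~\eqref{eq:(1+2)DDegenerateBurgersEqExpandedConditionForGenSyms} of the form $2\mathrm D_2\eta^{\kappa\lambda}=f^{\kappa\lambda}$, which is integrated downwards in $\lambda$ over the jet space with independent variable~$y$, polynomiality following by induction from Theorem~5.104 of~\cite{olve1993b}. Second, Lemma~\ref{lem:(1+2)DDegenerateBurgersEGenSyms} isolates the homogeneous part~\eqref{eq:(1+2)DDegenerateBurgersEqExpandedConditionForGenSyms3} of the determining equation generated by $uu_x$ and shows, via a ranking of monomials indexed by unordered tuples from~$\mathcal I$ and an analysis of the leading monomial, that its polynomial solutions of degree~$p$ reduce to $\theta^{\bar0\otimes\bar e_1}u_{01}u_{00}^{p-1}+\theta^{\bar e_1\otimes\bar0}u_{10}u_{00}^{p-1}$. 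Applying this lemma first to the monomials of maximal $x$-order and then to those of $y$-order at least three is what collapses $\eta$ to the six-term ansatz $\eta=\zeta^1uu_x+\zeta^0u_x+\chi^{20}u_{yy}+\chi^{10}u_y+\chi^{01}u+\chi^{00}$; only then does the residual determining system reproduce~$\mathfrak g$. Note also that even the collapsed intermediate form contains the monomials $u_y^{\,2}u^p$, so your claim that the splitting yields an expression affine in all derivatives of positive order is not reached by the argument you describe---eliminating $\varphi^pu_y^{\,2}u^p$ again requires collecting coefficients of specific monomials such as $u_{yy}^{\,2}u^p$ in the full determining equation.
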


The rest of this section deals with the proof of Theorem~\ref{thm:(1+2)DDegenerateBurgersEqGenSyms}.

In addition to the notation $(t,x,y)$,
it is convenient to simultaneously use another notation for the independent variables, $z_0=t$, $z_1=x$, $z_2=y$,
and thus $z=(z_0,z_1,z_2)=(t,x,y)$ is the tuple of independent variables.
A~differential function $F=F[u]$ of~$u$ is, roughly speaking, a~smooth function of~$z$, $u$
and a~finite number of derivatives of~$u$ with respect to~$z$.
Since the equation~\eqref{eq:(1+2)DDegenerateBurgersEq} is of evolution type,
each of its generalized symmetries is equivalent to a generalized symmetry
in the evolution form $\eta[u]\p_u$ \cite[Section~5.1]{olve1993b},
where the characteristic $\eta=\eta[u]$ of this symmetry
does not depend on derivatives of~$u$ involving differentiation with respect to~$z_0$.
Therefore, it suffices to consider only such generalized symmetries,
which are called reduced generalized symmetries.
The characteristic $\eta=\eta[u]$ of any of them satisfies the equation
\begin{gather}\label{eq:(1+2)DDegenerateBurgersEqConditionForGenSyms}
\mathrm D_0\eta+u\mathrm D_1\eta+u_x\eta-\mathrm D_2^2\eta=0
\end{gather}
on the manifold defined by the equation~\eqref{eq:(1+2)DDegenerateBurgersEq} and its differential consequences
in the corresponding jet space $\mathrm J^{\infty}(\mathbb R^3_z\times\mathbb R_u)$, 
and any solution of~\eqref{eq:(1+2)DDegenerateBurgersEqConditionForGenSyms} 
leads to a generalized symmetry of~\eqref{eq:(1+2)DDegenerateBurgersEq}.

Here and in what follows $\mathrm D_\mu$ denotes the operator of total derivative with respect to~$z_\mu$,
$\mathrm D_\mu=\p_\mu+u_{\alpha+\epsilon_\mu}\p_{u_\alpha}$.
The index~$\mu$ runs from~0 to~2, 
the multi-index $\alpha=(\alpha_0,\alpha_1,\alpha_2)$ runs through $\mathbb N_0{}^3$, $\mathbb N_0:=\mathbb N\cup\{0\}$, 
the conventional summation over repeated indices is used, 
and $\epsilon_\mu$ is the index triple with zeros everywhere except the $\mu$th entry that equals $1$.
The coordinate~$u_\alpha$ of the jet space $\mathrm J^{\infty}(\mathbb R^3_z\times\mathbb R_u)$ 
is identified with the derivative of~$u$ of order~$\alpha$,
$u_\alpha=\p^{|\alpha|}u/\p z_0^{\alpha_0}\p z_1^{\alpha_1}\p z_2^{\alpha_2}$.
For a $p$-tuple $\beta=(\beta^1,\dots,\beta^p)\in\mathbb N_0{}^p$, $p\in\mathbb N$, 
we denote $\#\beta:=p$ and $|\beta|:=\beta^1+\dots+\beta^p$, 
and $\#\beta:=0$ if the tuple~$\beta$ is empty.
The order of a differential function~$F$ is equal to the highest order of jet variables involved in~$F$,
where $\ord z_\mu=-\infty$ and $\ord u_\alpha=|\alpha|$.
We also use a shorter notation for coordinates of the jet space $\mathrm J^{\infty}(\mathbb R^3_z\times\mathbb R_u)$
associated with derivatives of~$u$ that do not involve differentiation with respect to~$z_0$,
$u_{kl}:=\p^{k+l}u/\p x^k\p y^l$, $k,l\in\mathbb N_0$,
and $\eta^{kl}:=\eta_{u_{kl}}$.
By $m$ and $n$ we denote the maximal orders of arguments of~$\eta$ with respect to~$x$ and~$y$, respectively,
i.e.,
$\eta=\eta(t,x,y,u_{kl},0\leqslant k\leqslant m,0\leqslant l\leqslant n)$, 
where $\eta_{u_{ml}}\ne0$ and $\eta_{u_{kn}}\ne0$ for some $k\in\{0,\dots,m\}$ and $l\in\{0,\dots,n\}$.
By default,
the indices~$k$, $k'$ and~$\kappa$ run from~0 to~$m$, and 
the indices~$l$, $l'$ and~$\lambda$ run from~0 to~$n$
whenever different ranges or additional constraints are not indicated explicitly.

After expanding the equation~\eqref{eq:(1+2)DDegenerateBurgersEqConditionForGenSyms} and
substituting the expressions implied by the equation~\eqref{eq:(1+2)DDegenerateBurgersEq}
and its differential consequences for derivatives $u_\alpha$ with $\alpha_0=1$,
we obtain the equation
\begin{gather}\label{eq:(1+2)DDegenerateBurgersEqExpandedConditionForGenSyms2}
\begin{split}
R:={}&\eta_t+\eta u_{10}+\eta_xu_{00}-\eta_{yy}-2\eta_{yu_{kl}}u_{k,l+1}-\eta_{u_{kl}u_{k'l'}}u_{k,l+1}u_{k',l'+1}
\\
&-\mathop{\mbox{$\displaystyle\sum_{k'\leqslant k,\,l'\leqslant l}$}}\limits_{(k',l')\ne(0,0)}
\binom k{k'}\binom l{l'}\eta_{u_{kl}}u_{k'l'}u_{k-k'+1,l-l'}=0.
\end{split}
\end{gather}

We will show that the characteristic~$\eta$ is a polynomial in the variables~$u_{kl}$
with coefficients depending on $(t,x,y)$.
For this purpose, we differentiate the equation~\eqref{eq:(1+2)DDegenerateBurgersEqExpandedConditionForGenSyms2}
with respect to~$u_{\kappa,\lambda+1}$ with an arbitrary fixed $(\kappa,\lambda)$, obtaining
\begin{gather}\label{eq:(1+2)DDegenerateBurgersEqExpandedConditionForGenSyms}
\begin{split}
2\mathrm D_2\eta^{\kappa\lambda}={}&\eta^{\kappa,\lambda+1}_t+\eta^{\kappa,\lambda+1}u_{10}+\eta^{\kappa,\lambda+1}_xu_{00}-\eta^{\kappa,\lambda+1}_{yy}
-2\eta^{\kappa,\lambda+1}_{yu_{kl}}u_{k,l+1}
\\&
-\eta^{\kappa,\lambda+1}_{u_{kl}u_{k'l'}}u_{k,l+1}u_{k',l'+1}
-\mathop{\mbox{$\displaystyle\sum_{k'\leqslant k,\,l'\leqslant l}$}}\limits_{(k',l')\ne(0,0)}
\binom k{k'}\binom l{l'}\eta^{\kappa,\lambda+1}_{u_{kl}}u_{k'l'}u_{k-k'+1,l-l'}
\\
&-\sum_{k\geqslant\kappa,\,l\geqslant\lambda+1}
\frac{k+1}{k-\kappa+1}\binom k{\kappa}\binom l{\lambda+1}
\eta^{kl}u_{k-\kappa+1,l-\lambda-1}
\\
&-(1-\delta_{\kappa0})\sum_{l>\lambda+1}  \binom l{\lambda+1}  
\eta^{\kappa-1,l}u_{0,l-\lambda-1},
\end{split}
\end{gather}
where $\delta$ denotes the Kronecker delta.
We consider the collection of equations~\eqref{eq:(1+2)DDegenerateBurgersEqExpandedConditionForGenSyms}
for all the admitted values of~$(\kappa,\lambda)$ as a system of differential equations with respect to $\eta^{kl}$
on the jet space with the independent variable~$y$ and dependent variables $u_{k0}$, $k\in\mathbb N_0$,
where $t$ and~$x$ rather play the role of parameters.
We successively integrate this system, starting from $\lambda=n$ and stepping down the value of~$\lambda$.
The equations~\eqref{eq:(1+2)DDegenerateBurgersEqExpandedConditionForGenSyms} with $\lambda=n$
are $2\mathrm D_2\eta^{\kappa n}=0$, which means that $\eta^{\kappa n}$ depends only on~$(t,x)$.
For each subsequent value of~$\lambda$, the equations~\eqref{eq:(1+2)DDegenerateBurgersEqExpandedConditionForGenSyms}
take the forms $2\mathrm D_2\eta^{\kappa\lambda}=f^{\kappa\lambda}$,
where $f^{\kappa\lambda}$ is a differential function in the above jet space,
which is expressed via~$\eta^{kl}$ with $l>\lambda$.
The formula (5.151) and Theorem~5.104 of~\cite{olve1993b} imply
that $\eta^{kl}$ is a polynomial in the tuple $(u_{kl})$ with coefficients depending on $(t,x,y)$
if $f^{\kappa\lambda}$ is such a polynomial.
Therefore, it is easy to prove by induction with respect to stepped down~$\lambda$
that $\eta^{kl}$ is such a polynomial.
Since $\eta^{kl}:=\eta_{u_{kl}}$, the characteristic~$\eta$ is also such a polynomial,
and thus $R$ is of the same kind.

It is convenient to introduce the set $\mathcal I$ of \emph{unordered} tuples of pairs of nonnegative integers, i.e.,
the set $\bigcup_{p=0}^\infty(\mathbb N_0\times\mathbb N_0)^p$ modulo
the equivalence of tuples of the same length~$p$ with respect to permutation of their elements.
The length of an element~$\bar\iota$ of~$\mathcal I$ (and, similarly, of other tuples),
the ``projections'' of~$\bar\iota$ to the $x$- and $y$-directions
and the corresponding monomial of jet variables $u_{kl}$
are denoted by~$\#\bar\iota$, $\pi_x\bar\iota$, $\pi_y\bar\iota$ and $\bar u_{\bar\iota}$, respectively,
$\#\bar\iota:=p$, $\pi_x\bar\iota:=\bar k=(k_1,\dots,k_p)$, $\pi_y\bar\iota:=\bar l=(l_1,\dots,l_p)$,
$\bar u_{\bar\iota}:=\prod_{s=1}^pu_{k_sl_s}$ if $\bar\iota=\big((k_1,l_1),\dots,(k_p,l_p)\big)$ with $p>0$
and $\#\bar\iota:=0$, $\pi_x\bar\iota=\pi_y\bar\iota=()$, $\bar u_{\bar\iota}:=1$ if the tuple~$\bar\iota$ is empty.
We write $\bar\iota=\bar k\otimes\bar l$ if $\pi_x\bar\iota=\bar k$ and $\pi_y\bar\iota=\bar l$.

In the above notation, the characteristic~$\eta$ can be written down as
\[
\eta=\sum_{\bar\iota\in\mathcal I}\eta^{\bar\iota}(t,x,y)\bar u_{\bar\iota}=\sum_{\bar\iota\in I}\eta^{\bar\iota}(t,x,y)\bar u_{\bar\iota}, \quad
\]
where
the coefficients $\eta^{\bar\iota}=\eta^{\bar\iota}(t,x,y)$ are smooth functions of~$(t,x,y)$, 
which do not vanish if and only if $\bar\iota\in I$,
and~$I$ is a finite subset of~$\mathcal I$.
Note that the degree $\deg\bar u_{\bar\iota}$ of a monomial~$\bar u_{\bar\iota}$ in totality of the jet variables $u_{kl}$
coincides with~$\#\bar\iota$.
Hence the similar degree $\deg\eta$ of~$\eta$ is naturally equal to $r:=\max\{\#\bar\iota\mid\bar\iota\in I\}$.

We define an appropriate ranking of the monomials of jet variables $u_{kl}$ based on the degree of monomials.
Ranking the monomials of jet variables $u_{kl}$ is equivalent to ranking the elements of~$\mathcal I$.
Since monomials are defined up to permutations of multipliers within them as well as the corresponding index tuples are considered to be unordered,
we preliminarily represent each $\bar\iota\in \mathcal I$ with $p:=\#\bar\iota>0$ in the form $\bar\iota=\big((k_1,l_1),\dots,(k_p,l_p)\big)$,
where $k_1\geqslant\dots\geqslant k_p$ and $l_s\geqslant l_{s+1}$ for all $s\in\{1,\dots,p-1\}$ with $k_s=k_{s+1}$.
We set that $\bar u_{\bar\iota}$ is of less ranking than $\bar u_{\bar\iota'}$ if
\[
\big(\#\bar\iota,\pi_x\bar\iota,\pi_y\bar\iota\big)\prec  
\big(\#\bar\iota',\pi_x\bar\iota',\pi_y\bar\iota'\big),   
\]
where the symbol~$\prec$ denotes the lexicographic order induced by the natural order $<$ of the integers,
$\pi_y\bar\iota$ is compared with $\pi_y\bar\iota'$ with respect to the lexicographic order,
and then
$\pi_x\bar\iota$ is compared with~$\pi_x\bar\iota'$ also with respect to the lexicographic order.

Hereafter, for an appropriate~$p\in\mathbb N$,
$\bar e_\sigma$ denotes the $p$-tuple whose components are zeros except $\sigma$th component equal to~1,
and $\bar0$ is the $p$-tuple of zeros.

\begin{lemma}\label{lem:(1+2)DDegenerateBurgersEGenSyms}
Suppose that for a finite $I'\subset \mathcal I$,
a polynomial $\theta=\sum_{\bar\iota\in I'}\theta^{\bar\iota}(t,x,y)\bar u_{\bar\iota}$ in $(u_{kl})$
satisfies the equation
\begin{gather}\label{eq:(1+2)DDegenerateBurgersEqExpandedConditionForGenSyms3}
V:=\theta u_{10}
-\mathop{\mbox{$\displaystyle\sum_{k'\leqslant k,\,l'\leqslant l}$}}\limits_{(k',l')\ne(0,0)}
\binom k{k'}\binom l{l'}\theta_{u_{kl}}u_{k'l'}u_{k-k'+1,l-l'}=0.
\end{gather}
Then for each fixed $p\in\mathbb N$ we have
$\{\bar\iota\in I'\mid\#\bar\iota=p,\,\theta^{\bar\iota}\ne0\}\subseteq\{\bar e_1\otimes\bar0,\,\bar0\otimes\bar e_1\}$
with $\#\bar e_1=\#\bar0=p$.
More specifically, the monomials of degree~$p$ in~$\theta$ are exhausted by
$\theta^{\bar0\otimes\bar e_1}u_{01}u_{00}^{p-1}+\theta^{\bar e_1\otimes\bar0}u_{10}u_{00}^{p-1}$.
\end{lemma}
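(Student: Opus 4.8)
The plan is to analyze the equation $V=0$ degree by degree in the jet variables $u_{kl}$ and show that at each degree $p\geqslant1$ the only surviving monomials are the two claimed ones. Write $\theta=\sum_p\theta_{(p)}$, where $\theta_{(p)}$ collects the terms with $\#\bar\iota=p$. The operator $V$ maps a monomial $\bar u_{\bar\iota}$ of degree $p$ to a combination of monomials of degree $p+1$: the term $\theta u_{10}$ raises the degree by one via multiplication by $u_{10}$, and the double sum also raises the degree by one since it replaces one factor $u_{k_sl_s}$ by the product $u_{k'l'}u_{k_s-k'+1,l_s-l'}$. Hence the homogeneous component of $V=0$ of degree $p+1$ involves only $\theta_{(p)}$ and reads
\begin{gather*}
\theta_{(p)}u_{10}
-\mathop{\mbox{$\displaystyle\sum_{k'\leqslant k,\,l'\leqslant l}$}}\limits_{(k',l')\ne(0,0)}
\binom k{k'}\binom l{l'}(\theta_{(p)})_{u_{kl}}u_{k'l'}u_{k-k'+1,l-l'}=0.
\end{gather*}
So it suffices to treat each $p$ separately, and I would fix $p$ and argue directly on $\theta_{(p)}$.

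First I would isolate, among the index tuples $\bar\iota$ with $\#\bar\iota=p$ and $\theta^{\bar\iota}\neq0$, a maximal one $\bar\iota^*$ with respect to the ranking defined just before the lemma (degree first, then $\pi_y$ lexicographically, then $\pi_x$ lexicographically). The key observation is that the double-sum term, applied to the monomial $\bar u_{\bar\iota^*}$, produces monomials of degree $p+1$ whose index tuple is obtained by splitting one pair $(k_s,l_s)$ of $\bar\iota^*$ into $(k',l')$ and $(k_s-k'+1,l_s-l')$; since $(k',l')\neq(0,0)$, the "$x$-weight plus $y$-weight" $|\pi_x|+|\pi_y|$ strictly increases by $1$ under such a split (the extra $+1$ in $k_s-k'+1$), whereas the term $\theta u_{10}$ appends a fixed pair $(1,0)$. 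I would compare these contributions under the chosen ranking and show that the single highest-ranked monomial produced can only be cancelled if the split is forced to be the "extremal" one, and iterate this to pin down the shape of $\bar\iota^*$. Concretely, I expect to conclude that each $\bar\iota$ with nonzero coefficient and $\#\bar\iota=p$ must have all of its pairs equal to $(0,0)$ except possibly one, which is either $(1,0)$ or $(0,1)$ — this is exactly the claim $\{\bar e_1\otimes\bar0,\,\bar0\otimes\bar e_1\}$ together with $\bar0\otimes\bar0$ itself; but note $\bar0\otimes\bar0$ contributes $\theta^{\bar0\otimes\bar0}u_{00}^p$, and applying $V$ to it gives the term $\theta^{\bar0\otimes\bar0}u_{00}^pu_{10}$ which must cancel against the double-sum acting on the $(1,0)$- and $(0,1)$-tuples; tracking this cancellation should also be part of the bookkeeping, and I would record it as the "more specific" statement about $\theta^{\bar0\otimes\bar e_1}u_{01}u_{00}^{p-1}+\theta^{\bar e_1\otimes\bar0}u_{10}u_{00}^{p-1}$.

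The main obstacle I anticipate is the combinatorial bookkeeping of the double sum: one must control precisely which degree-$(p+1)$ monomials arise, with which coefficients, from splitting the various factors of a fixed $\bar u_{\bar\iota}$, and verify that no "accidental" cancellations among different source monomials $\bar\iota$ can sustain a nonzero $\theta^{\bar\iota}$ outside the allowed set. The ranking is designed to make the leading monomial argument work — the highest-ranked monomial appearing on the left side of the degree-$(p+1)$ equation must vanish, forcing a constraint on the highest-ranked $\bar\iota^*$ — but one has to check that the map $\bar\iota\mapsto$ (its leading contribution) is injective enough on the relevant set, or else peel off leading terms one at a time. I would organize this as a descending induction on the ranking of $\bar\iota^*$ within fixed degree $p$: assuming all strictly higher-ranked tuples have zero coefficient, examine the leading monomial contributed by $\bar u_{\bar\iota^*}$, observe it cannot be produced by any other admissible source, and hence deduce $\theta^{\bar\iota^*}=0$ unless $\bar\iota^*\in\{\bar e_1\otimes\bar0,\,\bar0\otimes\bar e_1,\,\bar0\otimes\bar0\}$. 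The factorial/binomial coefficients are all positive, so no cancellation within a single source monomial can occur; this is what ultimately makes the argument go through.
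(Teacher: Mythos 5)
Your overall strategy---decompose $V=0$ by polynomial degree (and by the weights $|\pi_x|$, $|\pi_y|$) and then run a leading-monomial argument with respect to the ranking defined before the lemma---is exactly the paper's strategy, and your observations that the degree-$(p+1)$ component of $V$ involves only the degree-$p$ part of $\theta$ and that both terms of $V$ raise the $x$-weight by one while preserving the $y$-weight are correct and are used in the paper (which splits the system by fixed $(\#\bar\iota,|\pi_y\bar\iota|)$). However, the decisive computation is left undone, and the step as you state it---``examine the leading monomial contributed by $\bar u_{\bar\iota^*}$, observe it cannot be produced by any other admissible source, and hence deduce $\theta^{\bar\iota^*}=0$ unless $\bar\iota^*\in\{\bar e_1\otimes\bar0,\,\bar0\otimes\bar e_1,\,\bar0\otimes\bar0\}$''---fails for the tuples $\bar\iota^*=\bar0\otimes\nu\bar e_1$ with $\nu\geqslant2$, i.e.\ for the monomials $u_{0\nu}u_{00}^{p-1}$. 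For such a source the coefficient of the monomial $u_{0\nu}u_{00}^{p-1}u_{10}$ in $V$ is $1-\binom\nu\nu=0$ identically: the contribution of $\theta u_{10}$ is exactly cancelled by the split with $(k',l')=(0,\nu)$, so that monomial yields no constraint on $\theta^{\bar0\otimes\nu\bar e_1}$. Correspondingly, the paper's leading-coefficient equation
$\theta^{\bar\iota_*}\bigl(1-\sum_{s\colon(\kappa_s,\lambda_s)\ne(0,0)}1-\sum_{s\colon\kappa_s>1}\kappa_s-\sum_{s\colon\kappa_s=1,\,\lambda_s\ne0}1\bigr)=0$
only forces $\bar\iota_*\in\{\bar e_1\otimes\bar0\}\cup\{\bar0\otimes\nu\bar e_1\colon\nu\geqslant1\}$, and a second, genuinely different argument is needed to exclude $\nu\geqslant2$: the paper isolates the coefficient of the monomial $u_{01}u_{1,\nu-1}u_{00}^{p-1}$, which receives the single contribution $-\binom\nu1\theta^{\bar0\otimes\nu\bar e_1}$ and hence kills these terms. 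Your sketch does not anticipate this second step, and the ``iterate/peel off'' clause is too vague to be credited with it.

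A smaller inaccuracy: the term $\theta^{\bar0\otimes\bar0}u_{00}^pu_{10}$ does not ``cancel against the double sum acting on the $(1,0)$- and $(0,1)$-tuples''. Those sources contribute $u_{10}^2u_{00}^{p-1}$ and $u_{01}u_{10}u_{00}^{p-1}$ respectively, and no split can ever produce $u_{00}^pu_{10}$, because a split always inserts a factor $u_{k'l'}$ with $(k',l')\ne(0,0)$ together with a factor of positive $x$-order. Hence the coefficient of $u_{00}^pu_{10}$ in $V$ is just $\theta^{\bar0\otimes\bar0}$, which is therefore zero---exactly the paper's conclusion. This slip is harmless in itself, but together with the missing $\nu\geqslant2$ case it shows that the combinatorial bookkeeping you rightly identify as the crux has not actually been carried out.
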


\begin{proof}
The left-hand side~$V$ of the equation~\eqref{eq:(1+2)DDegenerateBurgersEqExpandedConditionForGenSyms3}
is a polynomial in~$(u_{kl})$.
Collecting coefficients of various monomials of~$(u_{kl})$ in~\eqref{eq:(1+2)DDegenerateBurgersEqExpandedConditionForGenSyms3},
we obtain a system for the coefficients~$\theta^{\bar\iota}$, $\bar\iota\in I'$.
This system splits into uncoupled subsystems associated with fixed values of $\#\bar\iota$ and $|\pi_y\bar\iota|$.
This is why we can partition the monomials of~$\theta$ into subsets according to these values and assume without loss of generality
that the set~$I'$ consists of tuples $\bar\iota$ with the same value $(p,\nu)$ of $(\#\bar\iota,|\pi_y\bar\iota|)$.
Then all monomials in~$V$ are of degree~$p+1$, i.e.,
$\#\bar\iota=p+1$ and $|\pi_y\bar\iota|=\nu$ for the values of~$\bar\iota$ associated with monomials of~$V$.

Let $\theta^{\bar\iota_*}\bar u_{\bar\iota_*}$ be the leading monomial of~$\theta$ with respect to the above ranking,
and thus $\theta^{\bar\iota_*}\ne0$.
Denote $\bar\kappa:=\pi_x\bar\iota_*$ and $\bar\lambda:=\pi_y\bar\iota_*$, so $|\bar\lambda|=\nu$,
and assume that the index~$s$ runs from~1 to~$p$.
Then the leading monomial in~$V$ is $u_{\bar\iota}=u_{\bar\iota_*}u_{10}$, i.e., $\bar\iota=(\bar\kappa,1)\otimes(\bar\lambda,0)$.
It arises in the first summand in~$V$ and the summands associated with the values
$(k',l')=(\kappa_s,\lambda_s)$ if $(\kappa_s,\lambda_s)\ne(0,0)$ and
$(k',l')=(1,0)$ if $\kappa_s>1$ or $\lambda_s\ne0$.
(The last two inequalities are required in order to avoid doubly counting the monomials of~$V$, 
which could happen for $(\kappa_s,\lambda_s)=(1,0)$.)
Collecting coefficients of this monomial in~$V$ leads to the equation
\[
\theta^{\bar\iota_*}\Bigg(1-
\sum_{s\colon (\kappa_s,\lambda_s)\ne(0,0)}1-\sum_{s\colon \kappa_s>1}\kappa_s-\sum_{s\colon\kappa_s=1,\,\lambda_s\ne0}1
\Bigg)=0.
\]
Since $\theta^{\bar\iota_*}\ne0$, this equation can be satisfied only if
either $\bar\iota_*=\bar e_1\otimes\bar0$ or $\bar\iota_*=\bar0\otimes\nu\bar e_1$ with $\nu>0$.

If $\bar\iota_*=\bar e_1\otimes\bar0$, then $\nu=0$ and thus
$\theta=\theta^{\bar e_1\otimes\bar0}u_{10}u_{00}^{p-1}+\theta^{\bar0\otimes\bar0}u_{00}^p$.
The polynomial~$V$ reduces to $V=\theta^{\bar0\otimes\bar0}u_{10}u_{00}^p$, i.e., $\theta^{\bar0\otimes\bar0}=0$.

Suppose that $\bar\iota_*=\bar0\otimes\nu\bar e_1$ with $\nu>0$.
The maximality of $\bar\iota_*$ in $I'$ implies that $|\pi_x\bar\iota|=0$ for all $\bar\iota\in I'$.
\noprint{
If $\mu:=\max\{|\pi_x\bar\iota|\mid\bar\iota\in I',\,\theta^{\bar\iota}\ne0\}$,
then we consider $\bar\iota_0=\max\{\bar\iota\in I'\mid |\pi_x\bar\iota|=\mu\}$
and re-denote $\bar\kappa:=\pi_x\bar\iota_0$ and $\bar\lambda:=\pi_y\bar\iota_0$.
Following the above arguments, we derive the equation~\eqref{eq:(1+2)DDegenerateBurgersEqExpandedConditionForGenSyms3},
where  $\theta^{\bar\iota_*}$ is replaced by~$\theta^{\bar\iota_0}$.
Since $|\pi_x\bar\iota|=\nu>0$, this equation implies the condition $\mu=0$.
}
In view of the assumption that $|\pi_y\bar\iota|:=\nu$ for all $\bar\iota\in I'$,
the polynomial~$\theta$ takes the form $\theta=\sum_{\bar l\colon |\bar l|=\nu}\theta^{\bar0\otimes\bar l} \bar u_{\bar0\otimes\bar l}$
and thus
\[
V=\sum_{\bar l\colon |\bar l|=\nu}\Bigg(
 \theta^{\bar0\otimes\bar l}  \bar u_{\bar0\otimes\bar l}u_{10}
-\theta^{\bar0\otimes\bar l}\sum_{s\colon l_s\ne0}\sum_{l'=1}^{l_s}\binom{l_s}{l'}\bar u_{\bar0\otimes(\bar l-(l_s-l')\bar e_s)}u_{1,l_s-l'}
\Bigg).
\]
If we suppose that $\nu>1$, then there is only one monomial in~$V$ containing~$u_{1,\nu-1}$,
which corresponds to $\bar l=\nu\bar e_1$, $s=1$ and $l'=1$,
and hence $\theta^{\bar0\otimes\nu\bar e_1}=0$ although it should be nonzero as the leading coefficient.
This contradiction implies that $\nu\leqslant1$.
If $\nu=1$, then $\theta=\theta^{\bar0\otimes\bar e_1} \bar u_{\bar0\otimes\bar e_1}$ and $V\equiv0$.
For $\nu=0$, we derive from the equation~\eqref{eq:(1+2)DDegenerateBurgersEqExpandedConditionForGenSyms3} 
that $\theta=0$. 
\end{proof}

Now we start splitting the equation~\eqref{eq:(1+2)DDegenerateBurgersEqExpandedConditionForGenSyms2}
via collecting coefficients of various monomials in its left-hand side~$R$.

Let $\mu:=\max\{|\pi_x\bar\iota|\mid\bar\iota\in I\}$.
We separate the monomials in~$R$ containing~$\bar u_{\bar\iota}$,
where the value of $|\pi_x\bar\iota|$ is maximum among the monomials of~$R$, which is equal to $\mu+1$.
If $\mu\geqslant1$, then this leads to the equation~\eqref{eq:(1+2)DDegenerateBurgersEqExpandedConditionForGenSyms3},
where $\theta$ is the sum of the monomials $\eta^{\bar\iota}\check u_{\bar\iota}$ for $\bar\iota\in I$ with $|\pi_x\bar\iota|=\mu$.
Then Lemma~\ref{lem:(1+2)DDegenerateBurgersEGenSyms} implies 
that $\mu=1$ and $|\pi_y\bar\iota|=0$ for any of these~$\bar\iota$. 

Collecting the monomials of~$R$ with $|\pi_x\bar\iota|=1$ and $|\pi_y\check\iota|\geqslant3$
gives the equation~\eqref{eq:(1+2)DDegenerateBurgersEqExpandedConditionForGenSyms3},    
where $\theta$ is the sum of the monomials $\eta^{\bar\iota}\check u_{\bar\iota}$ 
with $|\pi_x\bar\iota|=0$ and $|\pi_y\check\iota|\geqslant3$.
In view of Lemma~\ref{lem:(1+2)DDegenerateBurgersEGenSyms}, $\eta^{\bar\iota}=0$ for any of these~$\bar\iota$.

Therefore, $I=
\{\bar\iota\in\mathcal I\mid |\pi_x\bar\iota|=1,\,|\pi_y\bar\iota|=0\}\cup
\{\bar\iota\in\mathcal I\mid |\pi_x\bar\iota|=0,\,|\pi_y\bar\iota|\leqslant2\}$.
After changing the notation of the coefficients and jet variables, we can represent~$\eta$ in the form 
\[
\eta=\sum_{p=0}^\infty(\zeta^pu_x+\chi^{2p}u_{yy}+\chi^{1p}u_y+\chi^{0p}+\varphi^pu_y{}^2)u^p, \quad
\]
where the coefficients $\zeta^p$, $\chi^{2p}$, $\chi^{1p}$, $\chi^{0p}$ and~$\varphi^p$, $p\in\mathbb N_0$,
are smooth functions of~$(t,x,y)$, only a finite number of which do not vanish.
We substitute this representation for~$\eta$ 
into the equation~\eqref{eq:(1+2)DDegenerateBurgersEqExpandedConditionForGenSyms2}.
Successively collecting coefficients of the monomials 
$u_{yyy}u_yu^{p-1}$ with $p\geqslant1$, 
$u_{yy}{}^2u^p$ with $p\geqslant0$, 
$u_xu_y{}^2u^{p-2}$ with $p\geqslant2$, 
$u_{yy}u_yu^{p-1}$ with $p\geqslant1$ and
$u_y{}^2u^{p-2}$ with $p\geqslant2$ 
in the resulting equation, 
we respectively derive the equations 
\[
\chi^{2p}=0,\ p\geqslant1,\quad
\varphi^p=0,\ p\geqslant0,\quad
\zeta^p=0,\ p\geqslant2,\quad
\chi^{1p}=0,\ p\geqslant1,\quad
\chi^{0p}=0,\ p\geqslant2.
\]
They substantially restrict the form of~$\eta$ to 
$\eta=\zeta^1uu_x+\zeta^0u_x+\chi^{20}u_{yy}+\chi^{10}u_y+\chi^{01}u+\chi^{00}$.
The system of determining equations for the remaining coefficients of~$\eta$, 
which follows from the equation~\eqref{eq:(1+2)DDegenerateBurgersEqExpandedConditionForGenSyms2}, 
can be simplified to
\begin{gather*}
\zeta^1_x=\zeta^1_y=0,\quad \zeta^0_y=0,\quad 
\chi^{20}=-\zeta^1,\quad 
\chi^{01}=-\zeta^1_t-\zeta^0_x,\quad
\chi^{00}=-\zeta^0_t,\\
\chi^{10}_x=0,\quad
\chi^{01}_x=0,\quad
2\chi^{10}_y=-\zeta^1_t,\quad 
\chi^{10}_t=2\chi^{01}_y,\quad  
\chi^{01}_t=0,\quad  
\chi^{00}_t=0.  
\end{gather*}
Any solution of this system corresponds to $\eta$ that is equivalent to the characteristic 
of a Lie symmetry of the equation~\eqref{eq:(1+2)DDegenerateBurgersEq}, 
which completes the proof of Theorem~\ref{thm:(1+2)DDegenerateBurgersEqGenSyms}.

The computation of the last step can be checked via posing the upper bound two on the order of~$\eta$
as a differential function
and using, e.g., the excellent package {\sf Jets} by H.~Baran and M.~Marvan \cite{BaranMarvan,Marvan2009}
for {\sf Maple}.

\begin{remark}
Although the equation~\eqref{eq:(1+2)DDegenerateBurgersEq} admits no genuine generalized symmetries,
this is not the case at least for reduced equations~1.8--1.11.
In other words, the equation~\eqref{eq:(1+2)DDegenerateBurgersEq} admits
wide families of hidden generalized symmetries associated with reduced equations~1.8--1.11.
Recall that reduced equations~1.8 and~1.9 coincide with the (1+1)-dimensional linear heat equation,
and reduced equations~1.10 and~1.11 are the Burgers equation and the transport equation, respectively.
Generalized symmetries of the heat equation and the Burgers equation are well known,
see, e.g., \cite[Example~5.21]{olve1993b} and \cite[Chapter 4, Section 4.2]{boch1999a}.
In particular, the quotient algebra of generalized symmetries of the heat equation $w_1=w_{22}$
by the subalgebra of trivial generalized symmetries of this equation
is naturally isomorphic to the algebra
\[
\big\langle \big(t\bar{\mathrm D}_{z_2}+\tfrac12x\big)^nw^{(m)}\p_w,\,n,m\in\mathbb N_0, \ f(z_1,z_2)\p_w\big\rangle,
\]
where $\omega^{(m)}:=\p_2^{\;m}w$,
$\bar{\mathrm D}_{z_2}:=\p_2+\sum_{k=0}^\infty\omega^{(k+1)}\p_{\omega^{(k)}}$
is the reduced operator of total derivative with respect to~$z_2$,
and the function $f=f(z_1,z_2)$ runs through the solution set of the heat equation $f_{z_1}=f_{z_2z_2}$.
The algebra of generalized symmetries of the transport equation $w_1+ww_2=0$
as well as the spaces of cosymmetries and of conservation laws of this equation
are exhaustively described in the end of the next Section~\ref{sec:(1+2)DDegenerateBurgersEqCLs}.
\end{remark}

\section{Cosymmetries and conservation laws}\label{sec:(1+2)DDegenerateBurgersEqCLs}

In contrast to the classical Burgers equation whose space of local conservation laws is one-dimensional,
the (1+2)-dimensional degenerate Burgers equation~\eqref{eq:(1+2)DDegenerateBurgersEq}
admits an infinite-dimensional space of local conservation laws.
(See \cite{boch1999a,olve1993b,popo2008a,vino1984a} for definitions of related notions and necessary theoretical results.)

\begin{proposition}\label{pro:(1+2)DDegenerateBurgersEqCLs}
The quotient spaces of cosymmetries and of conservation-law characteristics
of the (1+2)-dimensional degenerate Burgers equation~\eqref{eq:(1+2)DDegenerateBurgersEq} 
by the corresponding subspaces of trivial objects%
\footnote{\label{fnt:DefOfTrivialCosymsAndCLChars}%
Similarly to generalized symmetries (cf.\ footnote~\ref{fnt:DefOfTrivialGenSyms}), 
a cosymmetry (resp.\ a conservation-law characteristic) of a system of differential equations~$\mathcal L$ is called \emph{trivial} 
if it vanishes on the solution set of~$\mathcal L$.
}
of the same kinds
are naturally isomorphic to the solution space of the (1+1)-dimensional backward heat equation $\gamma_t+\gamma_{yy}=0$,
$\gamma=\gamma(t,y)$.
The space of local conservation laws of the equation~\eqref{eq:(1+2)DDegenerateBurgersEq}
is naturally isomorphic to the space of conserved currents of the form
\begin{gather}\label{eq:(1+2)DDegenerateBurgersEqCCs}
(\gamma u,\, \tfrac12\gamma u^2,\,\gamma_yu-\gamma u_y)
\quad\mbox{with}\quad \gamma=\gamma(t,y)\colon\ \gamma_t+\gamma_{yy}=0.
\end{gather}
\end{proposition}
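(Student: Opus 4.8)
The plan is to describe all cosymmetries first and then deduce the statements about conservation-law characteristics and conservation laws. Recall that a cosymmetry of~\eqref{eq:(1+2)DDegenerateBurgersEq} is a differential function $\gamma=\gamma[u]$ that, on the manifold defined by~\eqref{eq:(1+2)DDegenerateBurgersEq} and its differential consequences, is annihilated by the formal adjoint of the linearization of~\eqref{eq:(1+2)DDegenerateBurgersEq}; since the linearization is $\mathrm D_0+u\mathrm D_1+u_x-\mathrm D_2^2$, its formal adjoint is $-\mathrm D_0-u\mathrm D_1-\mathrm D_2^2$ (the terms with~$u_x$ cancel), so the determining equation is $\mathrm D_0\gamma+u\mathrm D_1\gamma+\mathrm D_2^2\gamma=0$ on that manifold. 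Exactly as in Section~\ref{sec:(1+2)DDegenerateBurgersEqGenSyms}, I would work on the manifold itself, whose coordinates can be taken to be $(t,x,y,u_{kl})$, so that each cosymmetry coset is represented by a~$\gamma$ depending only on $(t,x,y,u_{kl})$, and then substitute the consequences of~\eqref{eq:(1+2)DDegenerateBurgersEq} for the emerging derivatives of~$u$ involving~$\p_t$, turning the determining equation into a smooth identity in the jet variables~$u_{kl}$.

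The decisive observation is a sign. In the generalized-symmetry equation~\eqref{eq:(1+2)DDegenerateBurgersEqExpandedConditionForGenSyms2} the term $-\mathrm D_2^2$ cancels the highest-order $y$-derivatives produced by substituting~$u_t$, which is why the argument there is long; here the term $+\mathrm D_2^2\gamma$ has the \emph{same} sign, so those derivatives \emph{double} rather than cancel. Consequently, if~$\gamma$ depended on a jet variable~$u_{kl}$ with $l\geqslant1$, then, taking the greatest such~$l=:n$ and the greatest~$m$ with $\gamma_{u_{mn}}\ne0$, the coefficient of~$u_{m,n+2}$ in the identity---which receives no contribution other than the doubled one---would be $2\gamma_{u_{mn}}\ne0$, a contradiction; hence~$\gamma$ is free of $y$-derivatives of~$u$. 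The coefficient of~$u_{m2}$ then forbids dependence on~$u_{m0}$ with $m\geqslant1$, and the coefficient of~$u_{02}$ forbids dependence on~$u$. Thus $\gamma=\gamma(t,x,y)$, the determining expression becomes $\gamma_t+u\gamma_x+\gamma_{yy}$, and its vanishing on the solution manifold---which projects onto the entire space of~$(t,x,y,u)$---forces $\gamma_x=0$ and $\gamma_t+\gamma_{yy}=0$. Conversely, every such~$\gamma$ is a cosymmetry, and distinct backward-heat solutions give inequivalent (and in fact order~$-\infty$) cosymmetries, since a nonzero function of~$(t,y)$ cannot vanish on the solution set. This proves the isomorphism for cosymmetries.

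For conservation-law characteristics note that each of them is a cosymmetry, hence equivalent to some $\gamma=\gamma(t,y)$ with $\gamma_t+\gamma_{yy}=0$, while for any such~$\gamma$ integration by parts gives
\[
\gamma\,(u_t+uu_x-u_{yy})=\mathrm D_0(\gamma u)+\mathrm D_1\big(\tfrac12\gamma u^2\big)+\mathrm D_2(\gamma_yu-\gamma u_y)-(\gamma_t+\gamma_{yy})\,u,
\]
whose right-hand side is the total divergence of the triple in~\eqref{eq:(1+2)DDegenerateBurgersEqCCs}; thus~$\gamma$ is a conservation-law characteristic and the two quotient spaces coincide. Finally, being of evolution type, the equation~\eqref{eq:(1+2)DDegenerateBurgersEq} is a normal, totally nondegenerate system, so its local conservation laws correspond bijectively, modulo triviality, to its conservation-law characteristics; choosing for the characteristic~$\gamma$ the conserved current exhibited by the displayed identity identifies the space of conservation laws with the space of currents~\eqref{eq:(1+2)DDegenerateBurgersEqCCs}, different~$\gamma$ producing inequivalent currents because they have different characteristics.

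The only genuinely delicate points are the passage to a $t$-derivative-free representative of a cosymmetry coset and the bookkeeping showing that the monomials~$u_{m,n+2}$, $u_{m2}$ and~$u_{02}$ receive no contribution besides the doubled one; everything else reduces either to the elementary integration by parts above or to the standard conservation-law/characteristic correspondence, for which one may invoke the references cited in this section.
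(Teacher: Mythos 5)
Your proposal is correct and follows essentially the same route as the paper's proof: the same adjoint determining equation $\mathrm D_0\gamma+u\mathrm D_1\gamma+\mathrm D_2^2\gamma=0$, the same reduction to a representative free of $t$-derivatives, the same key observation that the highest-order $y$-derivatives produced by substituting for $u_t$ \emph{double} the contribution of $\mathrm D_2^2\gamma$ (coefficient $2\gamma_{u_{kl}}$) and hence force $\ord\gamma=-\infty$, and the same splitting with respect to the degree in $u$ yielding $\gamma_x=0$ and $\gamma_t+\gamma_{yy}=0$ together with the conserved current~\eqref{eq:(1+2)DDegenerateBurgersEqCCs}. The only differences are organizational (you eliminate the jet variables in three stages rather than collecting all top-order terms at once, and you write out the divergence identity explicitly), which if anything makes the argument slightly more self-contained.
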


\begin{proof}
We follow the proof of Proposition~5 in~\cite{kont2017a}
and use the notation from the proof of Theorem~\ref{thm:(1+2)DDegenerateBurgersEqGenSyms}.
Cosymmetries of the equation~\eqref{eq:(1+2)DDegenerateBurgersEq} are differential functions of $u$,
$\gamma=\gamma[u]$, that satisfy the equation
\begin{gather}\label{eq:(1+2)DDegenerateBurgersEqConditionFoCosyms}
\mathrm D_0\gamma+u\mathrm D_1\gamma+\mathrm D_2^2\gamma=0
\end{gather}
on the manifold defined by the equation~\eqref{eq:(1+2)DDegenerateBurgersEq} and its differential consequences
in the corresponding jet space $\mathrm J^{\infty}(\mathbb R^3_z\times\mathbb R_u)$.
Since the equation~\eqref{eq:(1+2)DDegenerateBurgersEq} is of evolution type,
the cosymmetry~$\gamma$ can be assumed, up to equivalence of cosymmetries,
not to depend on derivatives of~$u$ involving differentiation with respect to~$z_0$.

Suppose that $\ord\gamma>-\infty$.
After expanding the equation~\eqref{eq:(1+2)DDegenerateBurgersEqConditionFoCosyms} and
substituting the expressions implied by the equation~\eqref{eq:(1+2)DDegenerateBurgersEq}
and its differential consequences for derivatives $u_\alpha$, where $\alpha_0=1$,
we collect the terms with derivatives of~$u$ of the highest order $r:=\ord\gamma+2$ 
appearing in the equation~\eqref{eq:(1+2)DDegenerateBurgersEqConditionFoCosyms},
which gives $2\sum_{k+l=r}\gamma_{u_{kl}}u_{k,l+2}=0.$
Splitting this equality with respect to $(r+2)$th order derivatives of~$u$
implies that $\gamma_{u_{kl}}=0$ for any~$(k,l)$ with $k+l=\ord\gamma$,
which contradicts the definition of order of a differential function.

Therefore, $\ord\gamma=-\infty$.
Separating terms with the first and zeroth degrees of~$u$
in the equations~\eqref{eq:(1+2)DDegenerateBurgersEqConditionFoCosyms},
we derive $\gamma_x=0$ and $\gamma_t+\gamma_{yy}=0$.
Any function $\gamma=\gamma(t,y)$ satisfying the last equation
is a characteristic of a conservation law of the Burgers equation~\eqref{eq:(1+2)DDegenerateBurgersEq},
which is associated with the conserved current~\eqref{eq:(1+2)DDegenerateBurgersEqCCs}.
\end{proof}

We consider cosymmetries and local conservation laws of various reduced equations
related to the Burgers equation~\eqref{eq:(1+2)DDegenerateBurgersEq} 
and interpret them as hidden cosymmetries and hidden conservation laws of~\eqref{eq:(1+2)DDegenerateBurgersEq}.

The reduced equations presented in Table~\ref{tab:LieReductionsOfCodim1},
except reduced equation~1.11, are second-order (1+1)-dimensional quasilinear evolution equations.
Reduced cosymmetries of such equations are of order not greater than zero,
and all these cosymmetries are conservation-law characteristics of the corresponding equations.
In fact, reduced equations~1.1--1.10 only admit reduced cosymmetries of order~$-\infty$.
The corresponding spaces of reduced cosymmetries are
\begin{gather*}\textstyle
1.1.\ \big\langle z_2M(2\kappa,\tfrac32,\tfrac\varepsilon4z_2^2),\,z_2U(2\kappa,\tfrac32,\tfrac\varepsilon4z_2^2)\big\rangle;\\[1ex] \textstyle
1.2.\ \big\langle z_2,\, z_2\int {\rm e}^{\varepsilon z_2^2/4}{\rm d}z_2-2\varepsilon {\rm e}^{\varepsilon z_2^2/4}\big\rangle;\\[1ex] \textstyle
1.3.\ \big\langle (z_2^2+2\varepsilon){\rm e}^{\varepsilon z_2^2/4},\, (z_2^2+2\varepsilon)\int {\rm e}^{-\varepsilon z_2^2/4}{\rm d}z_2+2\varepsilon z_2\big\rangle;\\[1ex]
1.4.\ \big\langle {\rm e}^{2z_1+\nu_1z_2},\,{\rm e}^{2z_1+\nu_2z_2}\big\rangle,\ \nu_{1,2}:=\tfrac12\beta\pm\tfrac12\sqrt{\beta^2+8\varepsilon'},
       \mbox{ \ if \ }\varepsilon'=1\mbox{ \ or \ }\varepsilon'=-1\mbox{ \ and \ }\beta^2>8,\\ \phantom{1.4.\ }
      \big\langle {\rm e}^{2z_1+\beta z_2/2},\,{\rm e}^{2z_1+\beta z_2/2}z_2\big\rangle\mbox{ \ if \ }\varepsilon'=-1\mbox{ \ and \ }\beta^2=8, \\ \phantom{1.4.\ }
      \big\langle {\rm e}^{2z_1+\beta z_2/2}\cos\mu z_2,\,{\rm e}^{2z_1+\beta z_2/2}\sin\mu z_2\big\rangle,\ \mu:=\tfrac12\sqrt{8-\beta^2},\mbox{ \ if \ }\varepsilon'=-1\mbox{ \ and \ }\beta^2<8;\\[1ex]
1.5.\ \{0\}\mbox{ \ if \ }\delta=0\quad\mbox{and}\quad \big\langle {\rm e}^{2z_2-4z_1}\big\rangle\mbox{ \ if \ }\delta=1;\\[1ex]
1.6.\ \big\langle 1,\,z_2\big\rangle\mbox{ \ if \ }\delta'=0\quad\mbox{and}\quad \big\langle 1,\,{\rm e}^{z_2}\big\rangle\mbox{ \ if \ }\delta'=1;\\[1ex]
1.7,\ 1.10.\ \big\langle 1\big\rangle;\\[1ex]
1.8,\ 1.9.\ \big\{\gamma(z_1,z_2)\mid\gamma_1+\gamma_{22}=0\big\}.
\end{gather*}
Here $M(a,b,\omega)$ are $U(a,b,\omega)$ are the Kummer and Tricomi functions,
or the confluent hypergeometric functions of the first and the second kinds,
respectively, which are solutions of Kummer's (confluent hypergeometric) equation
$\omega\varphi_{\omega\omega}+(b-\omega)\varphi_\omega-a\varphi=0$.
Therefore, all cosymmetries of the above reduced equations are induced by cosymmetries of the equation~\eqref{eq:(1+2)DDegenerateBurgersEq},
and hence similar claims hold for local conservation laws and their characteristics.
This cosymmetry induction can be explained using the adjoint variational principle
and its interpretation in~\cite{ibra2007a} within the framework of symmetry analysis.
See also the discussion in~\cite{popo2008a}.
Consider the equation~\eqref{eq:(1+2)DDegenerateBurgersEq} together with the adjoint equation
\begin{gather}\label{eq:(1+2)DDegenerateBurgersAdjointEq}
v_t+uv_x+v_{yy}=0.
\end{gather}
According to \cite[Theorem 3.4]{ibra2007a}, any symmetry of the equation~\eqref{eq:(1+2)DDegenerateBurgersEq}
can be extended to a symmetry of the system~\eqref{eq:(1+2)DDegenerateBurgersEq}, \eqref{eq:(1+2)DDegenerateBurgersAdjointEq},
and the extension can be realized in such a way that
the extended symmetry is also a symmetry of the Lagrangian $v(u_t+uu_x-u_{yy})$ of this system.
Thus, the extension of the vector fields~$D^t$, $D^x$, $P^t$, $G^x$, $P^y$ and $P^x$ to~$v$
gives $D^t-v\p_v$, $D^x+v\p_v$, $P^t$, $G^x$, $P^y$ and $P^x$, respectively.
Additionally to~\eqref{eq:(1+2)DDegenerateBurgersAdjointEq},
cosymmetries of~\eqref{eq:(1+2)DDegenerateBurgersEq} satisfy the constraint $v_x=0$.
The above cosymmetries of reduced equations are solutions of reduced systems for the system $v_t+v_{yy}=0$, $v_x=0$
with respect to the counterparts of one-dimensional $G$-inequivalent subalgebras of~$\mathfrak g$
listed in Section~\ref{sec:BurgersSystemLiereductionsOfCodim1}.
More specifically, to reduce cosymmetries in a way consistent with reductions of~\eqref{eq:(1+2)DDegenerateBurgersEq},
we modify one-dimensional subalgebras~$\mathfrak g_{1.1}$--$\mathfrak g_{1.10}$
by  projecting their basis elements to the space with the coordinates~$(t,x,y)$ and
by extending the obtained vector fields to~$v$
via combining with~$v\p_v$, which is an obvious Lie-symmetry vector field
of the system~\eqref{eq:(1+2)DDegenerateBurgersEq},~\eqref{eq:(1+2)DDegenerateBurgersAdjointEq}.
The algorithm of finding the proper coefficients~$\alpha$ of~$v\p_v$ is the following.
For each $i\in\{1,\dots,10\}$, we complete $(z_1,z_2,w)$ with an auxiliary variable~$z_0$
such that in the new coordinates $(z_0,z_1,z_2,w)$ the basis vector field~$Q$ of the algebra~$\mathfrak g_{1.i}$
is proportional to the vector field~$\p_{z_0}$.
A consistent ansatz for cosymmetries is $\gamma=\lambda^{-1}\mathrm J\tilde\gamma(z_1,z_2)$,
where $\mathrm J$ denotes the Jacobian of $(z_0,z_1,z_2)$ with respect to $(t,x,y)$,
and $\lambda$ is the multiplier that is canceled in the course of deriving reduced equation~$1.i$
after substituting ansatz~$1.i$ for~$u$ into the equation~\eqref{eq:(1+2)DDegenerateBurgersEq}.
This ansatz should correspond to the one-dimensional algebra spanned by $Q+\alpha v\p_v$.
We choose $z_0$ equal to $\ln|t|$, $\ln|t|$, $\ln|t|$,  $t$, $\ln|x|$, $t$, $y$, $x/t$, $x$ and~$y$,
and thus the coefficient~$\alpha$ takes the values
$-4\kappa+1$, $1$, $-3$, $-2$, $-2$, $0$, $0$, $0$, $0$, $0$
for Cases~1.1, \dots, 1.10, respectively.

Therefore, genuine hidden cosymmetries
and hidden conservation laws of~\eqref{eq:(1+2)DDegenerateBurgersEq}
are associated only with reduced equation~1.11, which is the transport equation
\[
w_1+ww_2=0.
\]
We denote
\[
\zeta^k:=\left(\frac1{w_2}\bar{\mathrm D}_{z_2}\right)^k(z_2-wz_1),\quad k\in\mathbb N_0:=\mathbb N\cup\{0\},
\]
where $\bar{\mathrm D}_{z_2}:=\p_2+\sum_{k=0}^\infty\omega^{(k+1)}\p_{\omega^{(k)}}$
is the reduced operator of total derivative with respect to~$z_2$, $\omega^{(k)}:=\p_2^{\;k}w$, $k\in\mathbb N_0$.
Thus, $\zeta^0:=z_2-wz_1$, $\zeta^1:=w_2^{\,-1}-z_1$, $\zeta^i:=(w_2^{\,-1}\bar{\mathrm D}_{z_2})^{i-1}w_2^{\,-1}$, \mbox{$i=2,3,\dots$}.
A~symbol with~$[\zeta]$, like $f[\zeta]$, denotes a differential function of~$w$
that depends at most on~$w$ and a finite number of~$\zeta^k$,
$f[\zeta]=f(w,\zeta^0,\dots,\zeta^l)$ for some $l\in\mathbb N_0$.
The quotient spaces of generalized symmetries, cosymmetries and conservation-law characteristics
of reduced equation~1.11 by the corresponding subspaces of trivial objects of the same kinds
are naturally isomorphic to the spaces
\begin{gather*}
\big\{w_2\eta[\zeta]\p_w\big\},\quad
\big\{\gamma[\zeta]\big\},\quad
\left\{\sum_{k=0}^\infty\left(-\frac1{w_2}\bar{\mathrm D}_{z_2}\right)^k\p_{\zeta^k}\rho[\zeta]\right\},\quad
\end{gather*}
respectively.
Here $\eta$, $\gamma$ and~$\rho$ run through the space of the above differential functions. 
See the appendix in~\cite{baik1989a} for computing the generalized symmetries of the transport equation.
The space of local conservation laws of reduced equation~1.11 is naturally isomorphic
to the quotient of the space of conserved currents of the form
\[
\big\{\big(w_2\rho[\zeta],\,ww_2\rho[\zeta]\big)\big\}
\]
by its subspace singled out by the constraint \[\sum_{k=0}^\infty(-w_2^{\,-1}\bar{\mathrm D}_{z_2})^k\p_{\zeta^k}\rho[\zeta]=0.\]
In view of Theorem~\ref{thm:(1+2)DDegenerateBurgersEqGenSyms}
and results from Section~\ref{sec:BurgersSystemLiereductionsOfCodim1},
generalized symmetries of~\eqref{eq:(1+2)DDegenerateBurgersEq}
induce only generalized symmetries of reduced equation~1.11
that are equivalent to Lie symmetries of this equation,
and the induced Lie symmetries constitute a finite-dimensional subalgebra
of the entire infinite-dimensional maximal Lie invariance algebra of reduced equation~1.11.
Up to the corresponding equivalences,
the cosymmetries and the conservation-law characteristics of reduced equation~1.11
that are induced by those of the equation~\eqref{eq:(1+2)DDegenerateBurgersEq}
are exhausted by the constant ones.
The only linearly independent conservation law induced by a conservation law of~\eqref{eq:(1+2)DDegenerateBurgersEq}
contains the conserved current $(w,w^2/2)$.
All the other generalized symmetries, cosymmetries and local conservation laws of reduced equation~1.11
are respectively nontrivial hidden generalized symmetries, cosymmetries and local conservation laws
of the (1+2)-dimensional degenerate Burgers equation~\eqref{eq:(1+2)DDegenerateBurgersEq}.

\begin{remark}
Reduced equations~1.1--1.9 have no genuine potential conservation laws. 
For reduced equations~1.8 and~1.9, which coincide with the (1+1)-dimensional linear heat equation,
this claim is nontrivial and was proved in~\cite[Theorem~5]{popo2008a}.
For reduced equations~1.6$_{\delta=0}$ and~1.7, the claim follows from results of~\cite{popo2005b} and~\cite{poch2017a},
respectively, but it can be checked for all reduced equations~1.1--1.7 in a similar way.
At the same time,
reduced equation~1.10, which is the Burgers equation,
possesses potential conservation laws with conserved currents
\mbox{${\rm e}^{-v/2}\big(\gamma,\,\gamma_2+\frac12\gamma w\big)$}, 
where the parameter function $\gamma=\gamma(z_1,z_2)$ runs through the solution set
of the (1+1)-dimensional backward heat equation $\gamma_1+\gamma_{22}=0$,
and the potential~$v=v(z_1,z_2)$ is defined by the system $v_1=w_2-\frac12w^2$, $v_2=w$, see~\cite{popo2005b}.
Reduced equation~1.11 also admits genuine potential conservation laws.
The above potential conservation laws of reduced equations~1.10 and~1.11 are nontrivial hidden potential conservation laws
of the (1+2)-dimensional degenerate Burgers equation~\eqref{eq:(1+2)DDegenerateBurgersEq}.
\end{remark}

\section{Common solutions of degenerate and nondegenerate Burgers equations in (1+2) dimensions}
\label{sec:CommonSolutionsOf(1+2)DDegenerateAndNondegenerateBurgersEqs}

Consider the solutions of
the (1+2)-dimensional degenerate Burgers equation~\eqref{eq:(1+2)DDegenerateBurgersEq}
that satisfy the additional constraint $u_{xx}=0$.
Such solutions obviously exhaust the set of common solutions of~\eqref{eq:(1+2)DDegenerateBurgersEq}
and the (1+2)-dimensional Burgers equation
\begin{gather}\label{eq:(1+2)DBurgersEq}
u_t+uu_x-u_{xx}-u_{yy}=0.
\end{gather}
To the best of our knowledge,
the latter equation was considered for the first time in~\cite{edwa1995a},
which included the computation of its maximal Lie invariance algebra
and a preliminary analysis of its two-step Lie reductions to ordinary differential equations.
As a physical model, this equation was first derived in~\cite{raja2008a}
for describing the wave phase of two-dimensional simple sound waves in weakly dissipative flows.
Therein, Lie reductions of the equation~\eqref{eq:(1+2)DBurgersEq}
were comprehensively studied in an optimized way
and wide families of new exact solutions for this equation were found.
As an equation with exceptional symmetries among (1+2)-dimensional diffusion--convection equations,
the equation~\eqref{eq:(1+2)DBurgersEq} was singled out
in the course of the group classification of such equations in~\cite{deme2008a},
where some exact solutions of~\eqref{eq:(1+2)DBurgersEq} were also constructed.

The integration of the constraint $u_{xx}=0$ gives the ansatz
\begin{gather}\label{eq:(1+2)DDegenerateBurgersEqAnsatzLinearInX}
u=w^1(t,y)x+w^0(t,y)
\end{gather}
for~$u$, which (separately) reduces both the equations~\eqref{eq:(1+2)DDegenerateBurgersEq} and~\eqref{eq:(1+2)DBurgersEq}
to a system of two (1+1)-dimensional evolution equations with respect to $(w^0,w^1)$,
\begin{gather}
w^1_t-w^1_{yy}+(w^1)^2=0,
\label{eq:(1+2)DDegenerateBurgersEqSystemForSolutionsAffineInX1}\\
w^0_t-w^0_{yy}+w^1w^0=0.
\label{eq:(1+2)DDegenerateBurgersEqSystemForSolutionsAffineInX2}
\end{gather}
This means that the generalized vector field $u_{xx}\p_u$
is a generalized conditional symmetry of both the equations~\eqref{eq:(1+2)DDegenerateBurgersEq} and~\eqref{eq:(1+2)DBurgersEq}.
The ansatz~\eqref{eq:(1+2)DDegenerateBurgersEqAnsatzLinearInX}
generalizes ansatzes 1.5 with $\delta=0$, 1.8 and~1.9,
which are derived from~\eqref{eq:(1+2)DDegenerateBurgersEqAnsatzLinearInX}
by setting $w^0=0$, $w^1=1/t$ and $w^1=0$, respectively.

We review the other families of exact solution associated with the constraint $u_{xx}=0$
that were constructed in~\cite[Section~14]{kont2017a}; see also~\cite{raja2008a}.

The equation~\eqref{eq:(1+2)DDegenerateBurgersEqSystemForSolutionsAffineInX1}
is the nonlinear heat equation with purely quadratic nonlinearity.
Looking for its stationary solutions,
we integrate the system $w^1_t=0$, $w^1_{yy}=(w^1)^2$
whose general solution is
$w^1=\wp\big(y/\sqrt6+C_2;0,C_1\big)$.
Here $C_1$ and~$C_2$ are arbitrary constants
and  $\wp=\wp(z;\textsl{g}_2,\textsl{g}_3)$ is the Weierstrass elliptic function,
which satisfies the differential equation
\begin{gather}\label{eq:WeierstrassEq}
(\wp_z)^2 = 4\wp^3-\textsl{g}_2\wp-\textsl{g}_3,
\end{gather}
where the numbers $\textsl{g}_2$ and $\textsl{g}_3$ called invariants
are respectively equal to $0$ and~$C_1$ for~$w^1$.
The constant~$C_2$ can be set to zero up to shifts of~$y$,
which are Lie symmetry transformations of both the initial equation~\eqref{eq:(1+2)DDegenerateBurgersEq}
and the reduced system~\eqref{eq:(1+2)DDegenerateBurgersEqSystemForSolutionsAffineInX1}--\eqref{eq:(1+2)DDegenerateBurgersEqSystemForSolutionsAffineInX2}.
For such general values of~$w^1$, the essential invariance algebra of
the equation~\eqref{eq:(1+2)DDegenerateBurgersEqSystemForSolutionsAffineInX2} is $\langle\p_t,w^0\p_{w^0}\rangle$.
Using the subalgebra $\langle\p_t+C_3w^0\p_{w^0}\rangle$ of this algebra with an arbitrary constant~$C_3$,
we construct the ansatz $w^0={\rm e}^{C_3t}\varphi(z)$ with $z=y/\sqrt6$,
which reduces~\eqref{eq:(1+2)DDegenerateBurgersEqSystemForSolutionsAffineInX2} to the Lam\'e equation
\begin{gather}\label{eq:(1+2)DDegenerateBurgersEqLameEq}
\varphi_{zz}=6\big(C_3+\wp(z;0,C_1)\big)\varphi.
\end{gather}
As a result, we get the following solution family
of the (1+2)-dimensional Burgers equations~\eqref{eq:(1+2)DDegenerateBurgersEq} and~\eqref{eq:(1+2)DBurgersEq}:
\begin{gather}\label{eq:(1+2)DDegenerateBurgersEqSolutionsWithLameFunctions}
u=\wp\big(y/\sqrt6;0,C_1\big)x+{\rm e}^{C_3t}\varphi\big(y/\sqrt6\big),
\end{gather}
where $C_1$ and~$C_3$ are arbitrary constants
and $\varphi$ is the general solution of Lam\'e equation~\eqref{eq:(1+2)DDegenerateBurgersEqLameEq}.

In the degenerate case $C_1=0$, the above value of~$w^1$ coincides, up to shifts in~$y$, with the function $w^1=6y^{-2}$.
Then the essential invariance algebra of the equation~\eqref{eq:(1+2)DDegenerateBurgersEqSystemForSolutionsAffineInX2}
becomes wider~\cite[Section~9.9]{ovsi1982}
and, which is more important, all solutions of this equation
can be expressed in terms of solutions of the linear heat equation
using the Darboux transformation,
\[
w^0=\mathrm{DT}[y,y^3+6t]\theta=\theta_{yy}-\frac3y\theta_y+\frac3{y^2}\theta.
\]
Here $\theta=\theta(t,y)$ is an arbitrary solution of the linear heat equation $\theta_t=\theta_{yy}$,
and, given solutions~$f^1$, \dots, $f^k$ of this equation with $\mathrm W(f^1,\dots,f^k)\ne0$,
the action of the corresponding Darboux operator on a solution~$f$ is defined~by 
\[
  \mathrm{DT}[f^1,\dots,f^k]f=\frac{\mathrm W(f^1,\dots,f^k,f)}{\mathrm W(f^1,\dots,f^k)},
\]
where $\mathrm W$'s denote the Wronskians of the indicated tuples of functions with respect to the `space' variable~$y$~\cite{popo2008a}.
We construct the following solution family of the (1+2)-dimensional Burgers equations~\eqref{eq:(1+2)DDegenerateBurgersEq} and~\eqref{eq:(1+2)DBurgersEq}:
\begin{gather}\label{eq:(1+2)DDegenerateBurgersEqSolutionSetIn1SolutionOfLHEq}
u=6\frac x{y^2}+\theta_{yy}-\frac3y\theta_y+\frac3{y^2}\theta,
\end{gather}
where $\theta=\theta(t,y)$ is an arbitrary solution of the linear heat equation $\theta_t=\theta_{yy}$.

Finally, setting $w^1$ to be the similarity solution of the nonlinear heat equation with purely quadratic nonlinearity
that is constructed in~\cite{bara2002d} (see also \cite[Section~5.1.1.1.1]{Polyanin&Zaitsev2012})
and finding similarity solutions of the corresponding equation~\eqref{eq:(1+2)DDegenerateBurgersEqSystemForSolutionsAffineInX2},
we obtain the solution
\begin{gather}\label{eq:(1+2)DDegenerateBurgersEqSolutionInHeunFunctions}
\begin{split}
u={}&12(4\pm\sqrt6)\frac{y^2+(18\pm8\sqrt6)t}{(y^2+10\lambda_\pm t)^2}x
+|t|^{\nu+3/2}\exp\left(-\frac{y^2}{4t}\right)\frac{C_1y+C_2|t|^{1/2}}{(y^2+10\lambda_\pm t)^2}
\\[.5ex] &\times
\mathop{\rm HeunC}\left(\frac52\lambda_\pm,-\frac12,-5,\frac58\lambda_\pm(4\nu+1),-\frac52\lambda_\pm\nu-\frac{59}8\mp\frac{29}8\sqrt6,\frac{-y^2}{10\lambda_\pm t}\right)
\end{split}
\end{gather}
of the equations~\eqref{eq:(1+2)DDegenerateBurgersEq} and~\eqref{eq:(1+2)DBurgersEq}.
Here $\lambda_\pm=3\pm\sqrt6$,
$\mathop{\rm HeunC}(\alpha,\beta,\gamma,\delta,\eta,z)$
is the confluent Heun function,
which is the solution of the following Cauchy problem for the confluent Heun equation with respect to $Y=Y(z)$:
\begin{gather*}
z(z-1)Y_{zz}+\big(\alpha z(z-1)+(\beta+1)(z-1)+(\gamma+1)z\big)Y_z\\
\qquad{}+\frac12\big(\alpha(\beta+1)(z-1)+\alpha(\gamma+1)z+2\delta z+(\beta+1)(\gamma+1)+2\eta-1\big)Y=0,\\[.5ex]
Y(0)=1,\quad Y_z(0)=\frac12\left(\frac{2\eta-1}{\beta+1}+\gamma+1-\alpha\right).
\end{gather*}
The solution family~\eqref{eq:(1+2)DDegenerateBurgersEqSolutionInHeunFunctions}
can be additionally extended by some point symmetry transformations of the respective equation. 
Thus, for the equation~\eqref{eq:(1+2)DDegenerateBurgersEq}, 
these are the transformations from Theorem~\ref{thm:BurgersSystemCompletePointSymGroup} with $\delta_1=1$.


\appendix

\section{Exact solutions of the heat equation}\label{sec:ExactSolutionsOfHeatEq}

The maximal Lie invariance algebra of (1+1)-dimensional linear heat equation $w_t=w_{yy}$,
which is denoted $\mathfrak a_8$ or $\mathfrak a_9$
in Section~\ref{sec:BurgersSystemLiereductionsOfCodim1}, is
\begin{gather*}
\begin{split}
\mathfrak a={}\big\langle&P^t=\p_t,\ P^x=\p_y,\ G^y=2t\p_y-yw\p_w,\ D=2t\p_t+y\p_y,
\\&
K=4t^2\p_t+4ty\p_y-(y^2+2t)w\p_w,\ I=w\p_w,\ f(t,y)\p_w\big\rangle,
\end{split}
\end{gather*}
where the function $f=f(t,y)$ runs through the solution set of the linear heat equation $f_t=f_{yy}$.
The complete point symmetry group of the heat equation
is generated by the one-parameter groups associated with vector fields from the algebra~$\mathfrak a$
and two discrete transformations:
alternating the sign of~$y$, $(t,y,w)\mapsto(t,-y,w)$,
and alternating the sign of~$w$, $(t,y,w)\mapsto(t,y,-w)$.

Lie invariant solutions of the heat equation
were comprehensively studied in~\cite[Example 3.17]{olve1993b}.
More precisely, all solutions that are invariant with respect to
inequivalent one-dimensional subalgebras of~$\mathfrak a$ were constructed in explicit form.
For convenience of referring, we list these solutions with the corresponding subalgebras below,
simultaneously correcting minor cumulative misprints.
A large set of solutions in closed form for the heat equation was collected in~\cite{widd1975a}.
See also~\cite{ivan2008a}.
Hereafter $C_1$ and $C_2$ are arbitrary constants, $a$ and $b$ are constant parameters.
\begin{gather}\label{sol_cylindricUV}
w=t^a\exp\left({-\frac{y^2}{8t}}\right)\left(C_1\mathop{\rm U}\Big(2a+\frac12,\frac y{\sqrt{2t}}\,\Big)+C_2\mathop{\rm V}\Big(2a+\frac12,\frac y{\sqrt{2t}}\,\Big)\right),
\quad \langle D+aI\rangle,
\end{gather}
where $\mathop{\rm U}(b,z)$ and $\mathop{\rm V}(b,z)$ are parabolic cylinder functions~\cite[$\S$ 19.1]{Abramowitz&Stegun1970}
constituting the standard fundamental set of solutions of the equation $\varphi''=\big(\frac14z^2+b\big)\varphi$;
\begin{gather}\label{sol_cylindricW}
\begin{split}
w={}&{(4t^2+1)^{-1/4}}{\exp\left({-\dfrac{ty^2}{4t^2+1}-\dfrac a2\arctan(2t)}\right)}\\
&\times\left(C_1\mathop{\rm W}\left(-\dfrac a2,\dfrac{\sqrt2\,y}{\sqrt{4t^2+1}}\right)
   +C_2\mathop{\rm W}\left(-\dfrac a2,-\dfrac{\sqrt2\,y}{\sqrt{4t^2+1}}\right)\right),
\quad \langle K+P^t+aI\rangle,
\end{split}
\end{gather}
where the parabolic cylinder functions $\mathop{\rm W}(b,\pm z)$ are the standard linearly independent solutions 
of the equation $\varphi''=-\big(\frac14z^2-b\big)\varphi$ \cite[$\S$\,19.17]{Abramowitz&Stegun1970};
\begin{gather}\label{sol_Airy}
w=\exp\left(yt+\frac23t^3\right)\left(C_1\mathop{\rm Ai}(y+t^2)+C_2\mathop{\rm Bi}(y+t^2)\right),
\quad \langle P^t+G^y+aI\rangle,
\end{gather}
where the Airy wave functions $\mathop{\rm Ai}$ and $\mathop{\rm Bi}$ are the standard linearly independent solutions of the Airy equation $\varphi''=z\varphi$;
\begin{gather*}
w=C_1{\rm e}^{t+y}+C_2{\rm e}^{t-y},\quad w=C_1x+C_2,\quad w={\rm e}^{- t}(C_1\cos y+C_2\sin y),
\quad \langle P^t+aI\rangle,
\end{gather*}
with $a=1,0,-1$, respectively.
The solutions~\eqref{sol_cylindricUV}, \eqref{sol_cylindricW} and~\eqref{sol_Airy}
can be expressed via hypergeometric functions or, for certain values of parameters, via Bessel functions.

From~\eqref{sol_cylindricUV} with $a=0$, we get the particular solution in terms of the error function
\begin{gather}\label{sol_errf}
w=\mathop{\rm erf}\left(\frac y{2\sqrt t}\right), \quad \mathop{\rm erf}(z):=\frac2{\sqrt\pi}\int^z_0 {\rm e}^{-\xi^2}\,{\rm d}\xi.
\end{gather}
It is obvious that there is also the particular solution in terms of the complementary error function~$\mathop{\rm erfc}$ of the same argument.
The solutions~\eqref{sol_cylindricW} and~\eqref{sol_errf} were presented in~\cite{olve1993b} with minor misprints.

Setting $a=-(n+1)/2$ in~\eqref{sol_cylindricUV} for $C_2=0$ and
using the formula \smash{$\mathop{\rm U}\left(-n-\tfrac12,z\right)={\rm e}^{-\frac{z^2}4}{\rm He}_n(z)$},
up to a constant multiplier, we get the particular solution
\begin{gather*}
w=t^{-\frac{n+1}2}{\rm e}^{-\frac{y^2}{4t}}\,{\rm He}_n\left(\frac y{\sqrt{2t}}\right),
\quad\mbox{or}\quad w=t^{-\frac{n+1}2}{\rm e}^{-\frac{y^2}{4t}}\,{\rm H}_n\left(\frac y{2\sqrt t}\right),
\end{gather*}
in terms of $n$th-order Hermite polynomials,
which are related as $\mathop{\rm He}_n(z)=2^{-\frac n2}\mathop{\rm H}_n\left(z/\sqrt2\right)$.
For $n=0$ it becomes the source solution
\begin{gather*}
\smash{w=\frac1{\sqrt t}{\rm e}^{-\frac{y^2}{4t}}}.
\end{gather*}
The relation $\mathop{\rm V}\left(n+\frac12,z\right)=\sqrt{\frac2\pi}{\rm e}^\frac{z^2}4\mathop{\rm He}_n^*(z)$,
where $\mathop{\rm He}_n^*(z)=(-{\rm i})^n\mathop{\rm He}_n({\rm i}z)$, leads to the heat polynomials
$y$, $y^2+2t$, $y^3+6ty$, $y^4+12ty^2+12t^2$, \dots, which can be written uniformly as
\begin{gather*}
w=n!\sum_{k=0}^{[n/2]}\frac{t^k}{k!}\frac{y^{n-2k}}{(n-2k)!},\quad n\in\mathbb N,
\end{gather*}
where $[n/2]$ is the floor function of $n/2$, i.e., the greatest integer less than or equal to $n/2$.

More solutions of the heat equation can be generated from the above solutions using the action by point symmetry transformations,
\[
\tilde w=\frac{\delta_3}{\sqrt{1+4\delta_6t}}
\exp\left({-\frac{\delta_6y^2+\delta_5y-\delta_5^2t}{1+4\delta_6t}}\right)
w\left(\frac{\delta_4^2t}{1+4\delta_6t}-\delta_2,\,
\delta_4\frac{y-2\delta_5t}{1+4\delta_6t}-\delta_1\right)
+h(t,y),
\]
where
$\delta_1,\ldots,\delta_6$ are arbitrary constants with $\delta_3\delta_4\ne0$ and
$w$ and $h$ are arbitrary solutions of the linear heat equation \cite[Example~2.41]{olve1993b}.
For generating solutions, we also can use the action of differential operators associated
with Lie symmetry vector fields of the heat equation.
In fact, it suffices to use only two of these operators, $\mathrm D_x$ and $2t\mathrm D_x+x$ \cite[Example~5.21]{olve1993b},
where $\mathrm D_x$ denotes the operator of total derivative with respect to~$x$.
Note that such action preserves the set of invariant solutions.
Wide families of explicit solutions of the heat equation can be constructed by linearly combining
solutions from the presented families with various values of parameters.

\section{New exact solutions of some nonlinear diffusion--convection equations}\label{sec:ExactSolutionsOfNDCEs}

As a by-product, 
in Sections~\ref{sec:BurgersSystemLieReductionOfCodim2} and~\ref{sec:CommonSolutionsOf(1+2)DDegenerateAndNondegenerateBurgersEqs} 
we construct families of inequivalent exact invariant solutions of several distinguished equations
similar to reduced ones, which we present below.
These families can be extended with symmetry transformations of the corresponding equations.
To the best of our knowledge, most of the solutions are new.

In particular, for the nonlinear diffusion equation with power nonlinearity of degree~$-1/2$, \[v_t=(v^{-1/2}v_x)_x,\]
we find the solutions
(for certain domains of $(t,x)$)\footnote{%
This domain is defined in the following way.
If $v=w^2$, where $w$ is a solution of the equation $ww_t=w_{xx}$ (resp. the equation $ww_t=-w_{xx}$),
then~$v$ satisfies the equation $v_t=(v^{-1/2}v_x)_x$
for $(t,x)$ with $w(t,x)>0$ (resp. with $w(t,x)<0$).
}
\begin{gather}\label{eq:SolutionsOfPower12DiffusionEq}
\begin{split}
&v=\frac{36t^2}{(x^2+C_1t^{4/3})^2},
 \quad
 v=9t^{-2/3}\left(\frac
 {C_1\mathop{\rm Ai} (t^{-1/3}x)+C_2\mathop{\rm Bi} (t^{-1/3}x)}
 {C_1\mathop{\rm Ai}'(t^{-1/3}x)+C_2\mathop{\rm Bi}'(t^{-1/3}x)}
 \right)^2,
\\[.5ex]
&v=\frac{36t^2x^2}{(x^3+C_1t^3)^2},\quad
 v=\frac{4(C_1{\rm e}^{x/t}-C_2{\rm e}^{-x/t})^2}{\big(C_1{\rm e}^{x/t}(x-t)-C_2{\rm e}^{-x/t}(x+t)\big)^2},\\[.5ex]
&v=\frac{4\big(-C_1\sin(x/t)+C_2\cos(x/t)\big)^2}{\big(C_1(x\cos(x/t)-t\sin(x/t))+C_2(x\sin(x/t)+t\cos(x/t))\big)^2},\\[.5ex]
&v=\left(\wp\big(x/\sqrt6;0,C_1\big)t+\varphi\big(x/\sqrt6\big)\right)^2,\quad 
 v=(6x^{-2}t+C_4x^3)^2.
\end{split}
\end{gather}
Here the Airy wave functions $\mathop{\rm Ai}$ and $\mathop{\rm Bi}$ are the standard linearly independent solutions
of the Airy equation $\tilde\psi_{\tilde\omega\tilde\omega}=\tilde\omega\tilde\psi$,
$\wp=\wp(z;\textsl{g}_2,\textsl{g}_3)$ is the Weierstrass elliptic function satisfying the equation~\eqref{eq:WeierstrassEq},
and $\varphi$ is the general solution of Lam\'e equation~\eqref{eq:(1+2)DDegenerateBurgersEqLameEq}.
Only the first, the third and the two last solution families are known in the literature.
Extended with Lie symmetry transformations of the $v^{-1/2}$-diffusion equation,
the first and the third families coincide with the fourth and fifth families from 5.1.10.8.2$^\circ$ in~\cite{Polyanin&Zaitsev2012} under setting $m=-1/2$,
respectively,
and the third family therein with $m=-1/2$ is contained in the first family from~\eqref{eq:SolutionsOfPower12DiffusionEq}.
The penultimate family was constructed in~\cite{amer1990,king1992a}; see also \cite[Eq.~5.1.10.7]{Polyanin&Zaitsev2012}.
For $C_1=0$, it degenerates, up to shifts of~$t$, to the last family. 
The common member of the first, the third and the last families of solutions is $v=36t^2/x^4$.
In addition, the equation  $v_t=(v^{-1/2}v_x)_x$ admits simple shift-invariant solutions
$v=1$, $v=x^2$, $v=4/(x-t)^2$, $v=4\tanh(x-t)$, $v=4\coth(x-t)$, $v=4\tan(x+t)$.

Another equation is the nonlinear diffusion--convection equation \[v_t=(v^{-1/2}v_x)_x+v^{-1/2}v_x,\]
for which we construct the new explicit solutions
\begin{gather*}
u=\frac{t^2}{(x+2\ln|t|)^2},\quad
u=-\frac14{\rm e}^{-x}\left(\frac{C_1J_1(t^{-1}{\rm e}^{-x/2})+C_2Y_1(t^{-1}{\rm e}^{-x/2})}{C_1J_0(t^{-1}{\rm e}^{-x/2})+C_2Y_0(t^{-1}{\rm e}^{-x/2})}\right)^2,\\[.5ex]
u= \frac14{\rm e}^{-x}\left(\frac{C_1I_1(t^{-1}{\rm e}^{-x/2})-C_2K_1(t^{-1}{\rm e}^{-x/2})}{C_1I_0(t^{-1}{\rm e}^{-x/2})+C_2K_0(t^{-1}{\rm e}^{-x/2})}\right)^2.
\end{gather*}

\section*{Acknowledgments}

The authors thank Dmytro Popovych and Galyna Popovych for helpful discussions and interesting comments. 
The authors are also grateful to the reviewer for carefully reading the paper and providing suggestions for improving it.
ROP and CS would like to express their gratitude for the reciprocal hospitality shown by their two institutions, 
OV is grateful to the University of Cyprus for the hospitality and support.
The research of ROP was supported by the Austrian Science Fund (FWF), projects P28770 and~P29177.
OV acknowledges the financial support of her research
within the L'Or\'eal--UNESCO For Women in Science International Rising Talents Programme 
and support of NAS of Ukraine within the project 0121U110543.

\end{document}